\documentclass[10pt,reqno,twoside]{amsart}
\usepackage{amssymb,amsmath,amsthm,soul,color,paralist}
\usepackage{t1enc}
\usepackage{comment}
\usepackage[cp1250]{inputenc}
\usepackage{a4,indentfirst,latexsym}
\usepackage{graphics}
\usepackage{mathrsfs}
\usepackage{cite,enumitem,graphicx}
\usepackage[colorlinks=true,urlcolor=blue,
citecolor=red,linkcolor=blue,linktocpage,pdfpagelabels,
bookmarksnumbered,bookmarksopen]{hyperref}
\usepackage[english]{babel}
\usepackage[left=2.50cm,right=2.50cm,top=2.72cm,bottom=2.72cm]{geometry}
\usepackage[metapost]{mfpic}
\usepackage[colorinlistoftodos]{todonotes}
\usepackage[normalem]{ulem}

\numberwithin{equation}{section}

\allowdisplaybreaks


\def\Xint#1{\mathchoice
  {\XXint\displaystyle\textstyle{#1}}%
  {\XXint\textstyle\scriptstyle{#1}}%
  {\XXint\scriptstyle\scriptscriptstyle{#1}}%
  {\XXint\scriptscriptstyle\scriptscriptstyle{#1}}%
  \!\int}
\def\XXint#1#2#3{{\setbox0=\hbox{$#1{#2#3}{\int}$}
  \vcenter{\hbox{$#2#3$}}\kern-.5\wd0}}
\def\-int{\Xint -}

\newcommand{\R}{\mathbb{R}}

\newcommand{\ri}{\rightarrow}

\DeclareMathOperator{\dive}{div}

\DeclareMathOperator{\e}{\varepsilon}

\newtheorem{lem}{Lemma}[section]
\newtheorem{thm}{Theorem}[section]

\newtheorem{remark}{Remark}[section]

\begin{document}
\title[On the Pohozaev identity for the fractional $p$-Laplacian operator in $\R^N$]
{On the Pohozaev identity for the fractional $p$-Laplacian operator in $\R^N$}

\author[V. Ambrosio]{Vincenzo Ambrosio}
\address{Vincenzo Ambrosio \hfill\break\indent
Dipartimento di Ingegneria Industriale e Scienze Matematiche \hfill\break\indent
Universit\`a Politecnica delle Marche\hfill\break\indent
Via Brecce Bianche, 12\hfill\break\indent
60131 Ancona (Italy)}
\email{v.ambrosio@staff.univpm.it}

\keywords{fractional $p$-Laplacian operator; variational methods; Pohozaev identity}
\subjclass[2010]{35R11, 35A15, 35J92}

\maketitle

\begin{abstract}
In this paper, we show the existence of a nontrivial weak solution for a nonlinear problem involving the fractional $p$-Laplacian operator and a Berestycki-Lions type nonlinearity. This solution satisfies a Pohozaev identity. Moreover, we prove that any sufficiently smooth solution fulfills the Pohozaev identity.
\end{abstract}

\section{Introduction}
Let $N\geq 2$, $s\in (0, 1)$ and $p\in (1, \frac{N}{s})$.
Let us consider the following nonlinear problem
\begin{equation}\label{P}
\left\{
\begin{array}{ll}
(-\Delta)^{s}_{p} u= g(u) \, \mbox{ in } \R^{N}, \\
u\in W^{s, p}(\R^{N}), 
\end{array}
\right.
\end{equation}
where $(-\Delta)^{s}_{p}$ is the fractional $p$-Laplacian operator (see \cite{AMRT, DTGCV, IMS, IN})
defined for all $u\in L^{p-1}_{s}$ by
\begin{align*}
(-\Delta)^{s}_{p}u(x)&=C_{N, s, p}\,\, P.V. \int_{\R^{N}} \frac{|u(x)-u(y)|^{p-2}(u(x)-u(y))}{|x-y|^{N+sp}}dy, \\
&=C_{N, s, p}\,\, \lim_{\e\ri 0^{+}} \int_{|y-x|>\e} \frac{|u(x)-u(y)|^{p-2}(u(x)-u(y))}{|x-y|^{N+sp}}dy, 
\end{align*}
provided the limit exists (here $P.V.$ stands for the Cauchy principal value), 
with
$$
C_{N, s, p}=\frac{\frac{sp}{2}(1-s)2^{2s-1}}{\pi^{\frac{N-1}{2}}} \frac{\Gamma(\frac{N+sp}{2})}{\Gamma(\frac{p+1}{2})\Gamma(2-s)},
$$
and
$$
L^{p-1}_{s}=\Bigl\{u:\R^{N}\ri \R \mbox{ measurable } : \int_{\R^{N}}\frac{|u(x)|^{p-1}}{1+|x|^{N+sp}}dx<\infty\Bigr\}.
$$
The nonlinearity $g:\R\ri \R$ is a continuous odd function that obeys the following Berestycki-Lions type conditions \cite{BL}:
\begin{compactenum}[$(g1)$]
\item $\displaystyle{-\infty<\liminf_{t\ri 0^{+}} \frac{g(t)}{t^{p-1}}\leq \limsup_{t\ri 0^{+}} \frac{g(t)}{t^{p-1}}=-m}$, with $m>0$,
\item $\displaystyle{-\infty\leq \limsup_{t\ri \infty} \frac{g(t)}{t^{p^{*}_{s}-1}}\leq 0}$, where $p^{*}_{s}=\frac{Np}{N-sp}$ is the fractional critical exponent,
\item there exists $\zeta>0$ such that $G(\zeta)>0$, where $G(t)=\int_{0}^{t}g(\tau)d\tau$.
\end{compactenum}
Our first result concerns the existence of a weak solution to \eqref{P} satisfying a Pohozaev type identity. More precisely, we prove the following result. 
\begin{thm}\label{thm1}
Let $N\geq 2$, $s\in (0, 1)$ and $p\in (1, \frac{N}{s})$. Assume that $(g1)$-$(g3)$ hold. Then there exists a nontrivial weak solution $u\in W^{s, p}(\R^{N})$ to \eqref{P} fulfilling the following Pohozaev identity: 
\begin{align}\label{POHOZAEV}
\frac{C_{N, s, p}(N-sp)}{2p}\iint_{\R^{2N}} \frac{|u(x)-u(y)|^{p}}{|x-y|^{N+sp}}dxdy-N\int_{\R^{N}} G(u)dx=0.
\end{align}
\end{thm}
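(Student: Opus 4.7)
The plan is to build a nontrivial critical point of the energy functional
\[
\mathcal{J}(u) := \frac{C_{N,s,p}}{2p}\iint_{\R^{2N}} \frac{|u(x)-u(y)|^{p}}{|x-y|^{N+sp}}\,dxdy - \int_{\R^{N}} G(u)\,dx
\]
associated with \eqref{P} on $W^{s,p}(\R^N)$, which in addition satisfies \eqref{POHOZAEV}. A direct computation based on the dilation identities $[u(\cdot/t)]^{p}_{s,p} = t^{N-sp}[u]^{p}_{s,p}$ and $\int G(u(\cdot/t))\,dx = t^{N}\int G(u)\,dx$ shows that \eqref{POHOZAEV} is exactly $\frac{d}{dt}|_{t=1}\mathcal{J}(u(\cdot/t)) = 0$. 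The idea is therefore to engineer a Palais--Smale sequence along which this dilation derivative vanishes asymptotically, so that the limiting critical point is \emph{automatically} born satisfying the Pohozaev identity.

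To do this I would use the augmented-functional device due to Jeanjean and Hirata--Ikoma--Tanaka. On $\R \times W^{s,p}_{\mathrm{rad}}(\R^N)$ define
\[
\tilde{\mathcal{J}}(\theta,u) := \mathcal{J}(u(e^{-\theta}\cdot)) = \frac{e^{(N-sp)\theta}C_{N,s,p}}{2p}[u]^{p}_{s,p} - e^{N\theta}\int_{\R^N} G(u)\,dx.
\]
Restricting to radial functions is essential to gain the compact embedding $W^{s,p}_{\mathrm{rad}}(\R^N) \hookrightarrow L^{q}(\R^N)$ for $p < q < p^{*}_{s}$. Conditions $(g1)$--$(g2)$ yield a bound $G(t) \le -\tfrac{m}{2}|t|^{p} + C|t|^{p^{*}_{s}}$ valid near zero, which together with the fractional Sobolev embedding makes $(0,0)$ a strict local minimum of $\tilde{\mathcal{J}}$ separated from zero by a sphere of positive value; condition $(g3)$, combined with a radial smooth cut-off of the constant $\zeta$ at large scale, produces a $u_0 \in W^{s,p}_{\mathrm{rad}}$ with $\int G(u_0)\,dx > 0$, so that $\tilde{\mathcal{J}}(\theta,u_0) \to -\infty$ as $\theta \to +\infty$. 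A mountain pass theorem then provides a Palais--Smale sequence $(\theta_n,u_n)$ at a positive mountain pass level $c$, with $\partial_\theta\tilde{\mathcal{J}}(\theta_n,u_n) \to 0$ and $\partial_u\tilde{\mathcal{J}}(\theta_n,u_n) \to 0$.

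Setting $v_n(x) := u_n(e^{-\theta_n}x)$ transfers the two partial-derivative conditions into an approximate weak formulation of \eqref{P} for $v_n$ together with an approximate version of \eqref{POHOZAEV}. Combined with $\mathcal{J}(v_n) \to c$ this yields uniform bounds on $\theta_n$ and on $[v_n]_{s,p}$, and along a subsequence one has $v_n \rightharpoonup v$ weakly in $W^{s,p}_{\mathrm{rad}}(\R^N)$ and strongly in $L^{q}(\R^N)$ for $p < q < p^{*}_{s}$. The step I expect to be most delicate is the passage to the limit in the nonlinear nonlocal operator: weak convergence in $W^{s,p}$ does not by itself imply $(-\Delta)^{s}_{p}v_n \to (-\Delta)^{s}_{p}v$ in the distributional sense. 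This obstacle is overcome by invoking the strict monotonicity of $(-\Delta)^{s}_{p}$ through the Simon-type algebraic inequalities for $|a|^{p-2}a - |b|^{p-2}b$, together with the strong convergence of the lower-order term $g(v_n) \to g(v)$, to upgrade to strong convergence $v_n \to v$ in $W^{s,p}(\R^N)$. Nontriviality of $v$ is ensured by the strict positivity of $c$, and the strong convergence transports the approximate Pohozaev identity into the exact equality \eqref{POHOZAEV} for the limit.
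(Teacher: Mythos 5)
Your proposal is a genuinely different route from the one in the paper. The paper does not use a Palais--Smale/mountain-pass argument at all: it sets $a(u)=\frac{C_{N,s,p}}{2p}[u]^{p}_{s,p}$ and $b(u)=\int_{\R^N}G(u)\,dx$, verifies the covariance laws $a(T_\sigma u)=\sigma^{N-sp}a(u)$, $b(T_\sigma u)=\sigma^{N}b(u)$, and solves the Berestycki--Lions constrained minimization $J=\inf\{a(u):b(u)=1\}$ via two abstract theorems of Chabrowski for scale-covariant potential operators (using Schwarz symmetrization and the decomposition $b=b_1-b_2$ to regain weak continuity of the good part of the potential on $W^{s,p}_{\rm rad}$). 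The minimizer, once dilated by $\bar\sigma=(\frac{N-sp}{N}J)^{1/(sp)}$, is a weak solution, and the Pohozaev identity drops out immediately from $a(\bar u)=\bar\sigma^{N-sp}J$, $b(\bar u)=\bar\sigma^{N}$. Your augmented-functional device (Jeanjean, Hirata--Ikoma--Tanaka) is the other standard way to manufacture a solution satisfying the Pohozaev relation ``by construction'': $\partial_\theta\tilde{\mathcal{J}}(\theta,u)$ is exactly the Pohozaev functional of $v=u(e^{-\theta}\cdot)$, so a Palais--Smale sequence for $\tilde{\mathcal{J}}$ carries an approximate identity along. Philosophically the two proofs are parallel — both obtain only a \emph{particular} solution satisfying \eqref{POHOZAEV}, not the identity for every weak solution — but technically they diverge. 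The constrained minimization avoids the Palais--Smale compactness analysis entirely: it only needs weak lower semicontinuity of $a$ and $b_2$ plus weak continuity of $b_1$ on radials, which is cheap via the Strauss-type lemma. Your route must instead (i) show boundedness of $(\theta_n)$ and $(v_n)$ from the approximate Pohozaev relation and the level $c>0$, and (ii) upgrade the weak limit of $v_n$ to a strong one so that the approximate Pohozaev identity passes to the limit.

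The place where your sketch is thin is step (ii). You write that strong convergence follows from the $S_+$-type monotonicity of $(-\Delta)^s_p$ via Simon's inequalities ``together with the strong convergence of $g(v_n)\to g(v)$,'' but under $(g1)$--$(g3)$ the latter is not free: $g$ has no pointwise growth control, only the one-sided/limsup bounds. In particular $g$ can behave like $-m|t|^{p-2}t$ near $0$, and $v_n\to v$ strongly in $L^q$ only for $p<q<p^*_s$, so $\int g(v_n)(v_n-v)\,dx$ does not obviously vanish. To make the argument close, you would need to split $g=g_1-g_2$ exactly as the paper does: $g_1$ is the subcritical part controlled by $o(|t|^{p-1})$ near $0$ and $o(|t|^{p^*_s-1})$ at infinity (after the modification of $g$ that one is allowed to make, as in Berestycki--Lions, so that \eqref{G2'} holds), for which $\int g_1(v_n)(v_n-v)\to 0$ by compact radial embedding and the Strauss lemma; and $g_2\ge m|t|^{p-2}t$ is the monotone, sign-definite part, which must be kept on the left-hand side and combined with the $(-\Delta)^s_p$ monotonicity rather than passed to the limit. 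This is a standard but genuinely necessary refinement, and without it the ``upgrade to strong convergence'' step as written has a gap. Once that decomposition is inserted your proposal becomes a correct alternative proof.

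Two minor points. First, $(0,0)$ is not a strict local minimum of $\tilde{\mathcal{J}}$ since $\tilde{\mathcal{J}}(\theta,0)=0$ for all $\theta$; what you actually have (and what the mountain-pass lemma requires) is a positive lower bound on a small sphere $\{\|u\|=\rho\}\times\R$ in the $u$-direction, which your estimate $G(t)\le -\frac{m}{2}|t|^p+C|t|^{p^*_s}$ (valid after the Berestycki--Lions modification of $g$) indeed provides. Second, the restriction $N\ge 2$ is used exactly as you use it — to invoke compact radial embedding — and this matches the paper.
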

We recall that the Pohozaev identity for the $p$-Laplacian operator $\Delta_{p}u=\dive(|\nabla u|^{p-2}\nabla u)$ has been extensively investigated in the literature; see for instance \cite{DGMS, GV, PS}.
On the other hand, in the fractional setting, if $s\in (0, 1)$ and $p=2$, then $(-\Delta)^{s}_{p}$ becomes the fractional Laplacian operator $(-\Delta)^{s}$, and the corresponding Pohozaev identity for weak solutions to \eqref{P} has been established in \cite[Proposition 4.1]{CW} (see also \cite[Theorem 3.5.1]{A} for more details and \cite[Theorem 1.1]{RS} for the case of bounded domains). More precisely,  inspired by \cite[Proposition 1]{BL}, the authors in \cite{CW} required that $g\in C^{1}(\R)$ and derived the Pohozaev identity for $(-\Delta)^{s}$ by employing the extension method \cite{CS}. They combined some regularity results for $(-\Delta)^{s}$ in $\R^{N}$ and for the operator $\dive(y^{1-2s} \nabla)$ in the upper half-space $\R^{N+1}_{+}=\{(x, y)\in \R^{N+1}: y>0 \}$, and performed integration by parts. 
When $p\neq 2$, we encounter some complications in adapting the approach in \cite{CW}. Although the extension method for $(-\Delta)^{s}_{p}$ has been recently explored in \cite[section 3]{DTGCV}, we do not have enough information about the regularity of the extension $U(x, y)$ of $u(x)$ in $\R^{N+1}_{+}$ whenever $p\neq 2$. Furthermore, in contrast to the linear case $p=2$, it remains unclear whether the following identity is valid:
$$
C\iint_{\R^{2N}} \frac{|u(x)-u(y)|^{p}}{|x-y|^{N+sp}}dxdy=\iint_{\R^{N+1}_{+}} y^{-1+p(1-s)} |\nabla U|^{p}dxdy,
$$
where $C>0$ is an appropriate constant. 
Therefore, we need to follow a different strategy to arrive at \eqref{POHOZAEV}. We stress that the main difficulty in reaching \eqref{POHOZAEV} consists in using $x\cdot \nabla u$, where $u$ is a weak solution to \eqref{P}, as a test function in the weak formulation of \eqref{P} and subsequently applying an integration by parts. Indeed, due to the fact that for $p\neq 2$ we have to handle the nonlinearity of the operator $(-\Delta)^{s}_{p}$ and its nonlocal character, it seems to be a challenging task to verify that $x\cdot \nabla u\in W^{s, p}(\R^{N})$. Moreover, it seems to be hard to accomplish an integration by parts formula for $(-\Delta)^{s}_{p}$. 
In this paper, by means of suitable variational methods for potential operators with covariance condition (see \cite{B, Chab}), we show the existence of a weak solution to \eqref{P} that satisfies \eqref{POHOZAEV}.
We recall that $u\in W^{s, p}(\R^{N})$ is a weak solution to \eqref{P} if  $\langle I'(u),\phi\rangle=0$ for all $\phi\in W^{s, p}(\R^{N})$, where $I:W^{s, p}(\R^{N})\ri \R$ is  the energy functional associated with \eqref{P}, namely, 
$$
I(u)=\frac{C_{N, s, p}}{2p}\iint_{\R^{2N}} \frac{|u(x)-u(y)|^{p}}{|x-y|^{N+sp}}dxdy-\int_{\R^{N}} G(u)dx.
$$
To implement the abstract results in \cite{B, Chab}, we work with $I$ on the radial subspace $W^{s, p}_{\rm rad}(\R^{N})$ and exploit two fundamental facts: the Schwarz symmetrization decreases the Gagliardo seminorm in $W^{s, p}(\R^{N})$ \cite[Theorem 9.2]{AL}, and $W^{s, p}_{\rm rad}(\R^{N})$ is compactly embedded into $L^{r}(\R^{N})$ for all $r\in (p, p^{*}_{s})$ \cite[Theorem II.1]{Lions}; see Theorem \ref{THMBL}.
We emphasize that Theorem \ref{thm1} guarantees the existence of a weak solution to \eqref{P} fulfilling \eqref{POHOZAEV}, but unlike the case $p=2$, it does not assert that every weak solution to \eqref{P} satisfies \eqref{POHOZAEV}.
However, this last statement is true for $C^{1, 1}$ functions that solve the equation in \eqref{P} pointwise.
More precisely, our second main result can be stated as follows. 
\begin{thm}\label{thm2}
Let $N\geq 2$, $s\in (0, 1)$ and $p\in (1, \frac{N}{s})$.  
When $p\in (1, 2)$, we assume that $s<\frac{2(p-1)}{p}$. Assume that $(g1)$-$(g3)$ hold.
Let $u\in W^{s, p}(\R^{N})\cap C^{1, 1}(\R^{N})$ be such that $(-\Delta)^{s}_{p}u=g(u)$ in $\R^{N}$. Then \eqref{POHOZAEV} is valid.
\end{thm}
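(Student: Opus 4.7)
\medskip
\noindent The plan is to test the equation against a smooth cutoff of $x\cdot\nabla u$, pass to the limit as the cutoff exhausts $\R^N$, and compare the outcome with the explicit scaling of the energy. Setting $u_\lambda(x):=u(\lambda x)$ and applying a change of variables one finds
\begin{equation*}
I(u_\lambda)=\frac{C_{N,s,p}}{2p}\lambda^{sp-N}[u]^p_{s,p}-\lambda^{-N}\int_{\R^N} G(u)\,dx,
\end{equation*}
so that differentiating at $\lambda=1$ gives
\begin{equation*}
\frac{d}{d\lambda}\bigg|_{\lambda=1}I(u_\lambda)=\frac{C_{N,s,p}(sp-N)}{2p}[u]^p_{s,p}+N\int_{\R^N} G(u)\,dx.
\end{equation*}
Consequently \eqref{POHOZAEV} is equivalent to the vanishing of this derivative, and the task reduces to identifying it with a limit of weak-formulation pairings.

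Choose $\chi_R\in C_c^\infty(\R^N)$ with $\chi_R\equiv 1$ on $B_R$, $\chi_R\equiv 0$ outside $B_{2R}$, and $|\nabla\chi_R|\le C/R$, and set $\phi_R(x):=\chi_R(x)(x\cdot\nabla u(x))$. Since $u\in C^{1,1}(\R^N)$, $\phi_R$ is Lipschitz with compact support and therefore lies in $W^{s,p}(\R^N)$. Any pointwise $C^{1,1}$ solution of \eqref{P} is in particular a weak solution (by symmetrization of the principal-value integral against a test function), so $\langle I'(u),\phi_R\rangle=0$, i.e.\
\begin{equation*}
\frac{C_{N,s,p}}{2}\iint_{\R^{2N}}\frac{|u(x)-u(y)|^{p-2}(u(x)-u(y))(\phi_R(x)-\phi_R(y))}{|x-y|^{N+sp}}\,dx\,dy=\int_{\R^N} g(u)\phi_R\,dx.
\end{equation*}

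For the right-hand side, the chain rule $g(u)(x\cdot\nabla u)=x\cdot\nabla G(u)$ together with a classical integration by parts yields
\begin{equation*}
\int_{\R^N} g(u)\phi_R\,dx=-N\int_{\R^N} G(u)\chi_R\,dx-\int_{\R^N} G(u)(x\cdot\nabla\chi_R)\,dx.
\end{equation*}
Since $u\in C^{1,1}\cap L^p$ is uniformly continuous and $L^p$-integrable, $u(x)\to 0$ as $|x|\to\infty$; combined with $(g1)$ this gives $|G(u)|\lesssim|u|^p$ outside a compact set, hence $G(u)\in L^1(\R^N)$, and dominated convergence yields $\int_{\R^N}g(u)\phi_R\,dx\to -N\int_{\R^N}G(u)\,dx$ as $R\to\infty$. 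For the left-hand side, the claim is that
\begin{equation*}
\iint_{\R^{2N}}\frac{|u(x)-u(y)|^{p-2}(u(x)-u(y))(\phi_R(x)-\phi_R(y))}{|x-y|^{N+sp}}\,dx\,dy\longrightarrow\frac{sp-N}{p}[u]^p_{s,p},
\end{equation*}
the target value being obtained by differentiating the exact scaling identity $[u_\lambda]^p_{s,p}=\lambda^{sp-N}[u]^p_{s,p}$ in $\lambda$ at $\lambda=1$ under the integral sign. Combining these two limits then produces \eqref{POHOZAEV}.

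\emph{Main obstacle.} The crux is the dominated-convergence justification controlling the integrand uniformly in $R$, equivalently the legality of exchanging $\tfrac{d}{d\lambda}$ with $\iint$. Using $u\in C^{1,1}$ one expands $|a+b|^p-|a|^p$ to first order with $a=u(x)-u(y)$ and $b$ proportional to $(x\cdot\nabla u(x)-y\cdot\nabla u(y))(\lambda-1)$. When $p\ge 2$, the standard second-order inequality for $|\cdot|^p$, bounding the remainder by $C(|a|^{p-2}+|b|^{p-2})b^2$, produces a globally integrable majorant once the decay of $u$ and $\nabla u$ at infinity is in hand (bootstrapped from the equation). When $p\in(1,2)$ the factor $|a|^{p-2}$ is singular at $a=0$ and one must split the analysis into the regimes $|b|\lesssim|a|$ and $|b|\gtrsim|a|$; combining $|u(x)-u(y)|\le\|\nabla u\|_\infty|x-y|$ with the Lipschitz regularity of $\nabla u$, near-diagonal integrability of the resulting majorant requires exactly $sp<2(p-1)$, that is $s<\tfrac{2(p-1)}{p}$, which is the constraint imposed in the subquadratic range.
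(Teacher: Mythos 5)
Your approach is the ``direct'' one the paper explicitly flags as problematic in its introduction: test the weak formulation against a cutoff of $x\cdot\nabla u$ and pass to the limit. The paper instead builds two lemmas -- an integral representation formula for $(-\Delta)^s_p v$ with $v\in C^{1,1}$ (Lemma~\ref{RIF}), and an integration-by-parts identity in which the kernel $|x-y|^{-(N+sp)}$, not the test function, is differentiated (Lemma~\ref{LEMMA}) -- and then applies the latter with $\mathcal{X}(x)=\varphi(\lambda x)x$, sending $\lambda\to 0$. The crucial advantage of that route is that the resulting integrand is $|u(x)-u(y)|^p$ multiplied by the bracket
\[
\dive\mathcal{X}(x)+\dive\mathcal{X}(y)-(N+sp)\,\frac{(\mathcal{X}(x)-\mathcal{X}(y))\cdot(x-y)}{|x-y|^2},
\]
which is bounded uniformly in $\lambda$ and converges pointwise to $N-sp$, so dominated convergence is immediate from $u\in W^{s,p}(\R^N)$ alone.

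Your version never reaches that structure, and the step you label the ``main obstacle'' is a genuine gap, not a routine verification. Concretely, you need
\[
\iint_{\R^{2N}}\frac{|u(x)-u(y)|^{p-2}(u(x)-u(y))\bigl(\phi_R(x)-\phi_R(y)\bigr)}{|x-y|^{N+sp}}\,dx\,dy
\;\longrightarrow\;\frac{sp-N}{p}\,[u]^p_{s,p}
\]
as $R\to\infty$, with $\phi_R(x)=\chi_R(x)\,x\cdot\nabla u(x)$, but you neither produce an $R$-independent integrable majorant nor identify the limit. With $v(x)=x\cdot\nabla u(x)$ one has $|\nabla v(x)|\lesssim 1+|x|$ (since $u\in C^{1,1}$), so near the diagonal the candidate majorant grows linearly in $|x|$; off the diagonal, controlling $|v(x)-v(y)|$ uniformly in $R$ requires decay of $u$ and $\nabla u$ that the hypotheses do not supply and that you do not derive (your parenthetical ``bootstrapped from the equation'' is doing a lot of unproven work -- the theorem assumes only $u\in W^{s,p}\cap C^{1,1}$, and decay of $\nabla u$ does not follow from this). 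Your alternative framing as differentiating $\lambda\mapsto [u_\lambda]^p_{s,p}$ under the integral sign hits the same wall: the formal derivative involves $x\cdot\nabla u(x)-y\cdot\nabla u(y)$, whose absolute integrability against the kernel is exactly the question of whether $x\cdot\nabla u$ lives in $W^{s,p}$, which the paper points out is not clear for $p\neq 2$. You have correctly located where the restriction $s<\tfrac{2(p-1)}{p}$ enters (near-diagonal integrability when $p<2$), and your treatment of the right-hand side (integration by parts and dominated convergence for $\int g(u)\phi_R$) is fine; but the left-hand-side limit, which is the heart of the theorem, remains unjustified, and the sketch given does not obviously close. To make your strategy rigorous you would essentially have to rediscover something like Lemma~\ref{LEMMA}, i.e.\ transfer derivatives from the test function onto the kernel so that the $p$-th power $|u(x)-u(y)|^p$ stays intact.
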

The proof of Theorem \ref{thm2} goes as follows. First we prove an integral representation formula for $(-\Delta)^{s}_{p}v$, with $v\in C^{1, \gamma}(\R^{N})$ and convenient $\gamma\in (0, 1]$, in the spirit of \cite[Lemma 3.2]{DPV} (see Lemma \ref{RIF}). Second, motivated by \cite[Lemma 4.2]{DFW1}, we establish an integration by parts formula for $W^{s, p}\cap C^{1, 1}$ functions and vector fields of class $C^{0, 1}_{c}$ (see Lemma \ref{LEMMA}). 
Finally, we multiply the equation in \eqref{P} by $\varphi(\lambda x) x\cdot \nabla u$, where $\varphi\in C^{1}_{c}(\R^{N})$ is such that $\varphi=1$ in a neighborhood of $0$ and $\lambda>0$, integrate over $\R^N$, apply our integration by parts formula, and then take the limit as $\lambda\ri 0^{+}$ in the resulting relation to obtain the desired identity. 

An appropriate comment on the assumption $u\in C^{1, 1}$ is necessary. In view of $(g1)$ and $(g2)$, it is possible to check that every weak solution to \eqref{P} is H\"older continuous (see Remark \ref{REMAI}), but we do not know if this regularity is optimal; see \cite{BLS} for a more detailed discussion on this subject. Nevertheless, we suspect that one could achieve the Pohozaev identity \eqref{POHOZAEV} by approximating \eqref{P} with more regular problems depending on a small parameter $\e>0$. For instance, consider mixed local and nonlocal problems driven by $-\e \Delta_{p}+(-\Delta)^{s}_{p}$, as in \cite{DFM}, whose solutions are $C^{1, \alpha}_{\rm loc}$, and try to perform an integration by parts before passing to the limit as $\e\ri 0^{+}$.
A different approach to discover \eqref{POHOZAEV} would be to work with difference quotients of dilations, instead of 
utilizing directly the Euler vector field $x\cdot \nabla u$. However, in this paper we assume that $u\in C^{1, 1}$ and provide a simple and self-contained proof of \eqref{POHOZAEV} that we believe to be useful for future works.

To the best of our knowledge, this is the first time that a Pohozaev identity for the fractional $p$-Laplacian operator $(-\Delta)^{s}_{p}$ in $\R^{N}$ has been obtained in the literature. \\
The paper is organized as follows. In Section $2$ we give the proof of Theorem \ref{thm1}. Section $3$ is devoted to the proof of Theorem \ref{thm2}.

\subsection*{Notations:}
Henceforth, $\|\cdot\|_{L^{p}(\R^{N})}$ denotes the $L^{p}(\R^N)$-norm with $p\in [1, \infty]$. For $s\in (0, 1)$ and $p\in [1, \infty)$, the fractional Sobolev space 
$W^{s, p}(\R^{N})$ is the set of functions $u\in L^{p}(\R^{N})$ such that $[u]_{s, p}^{p}=\iint_{\R^{2N}} \frac{|u(x)-u(y)|^{p}}{|x-y|^{N+sp}}dxdy<\infty$. The space $W^{s, p}(\R^{N})$ is a Banach space with the norm 
$$
\|u\|_{W^{s, p}(\R^{N})}=(\|u\|_{L^{p}(\R^{N})}^p+[u]^{p}_{s, p})^{\frac{1}{p}}.
$$
With $C^{k, \gamma}(\R^{N})$, where $k\in \mathbb{N}\cup \{0\}$ and $\gamma\in (0, 1]$, we denote the set of functions $u\in C^{k}(\R^N)$ whose partial derivatives up to order $k$ are bounded and such that 
\begin{align*}
 \|u\|_{C^{k, \gamma}(\R^{N})}
 =\sum_{|\beta|\leq k}\|D^{\beta}u\|_{L^{\infty}(\R^{N})}+\sum_{|\beta|=k} \, \sup_{\substack{x, y\in \mathbb{R}^{N} \\ x\neq y}} \frac{|D^{\beta}u(x)-D^{\beta}u(y)|}{|x-y|^{\gamma}}<\infty,
\end{align*} 
where we have used the multi-index notation, that is, if $u\in C^{k}(\R^{N})$ and $\beta=(\beta_{1}, \dots, \beta_{N})\in (\mathbb{N}\cup\{0\})^{N}$ is a multi-index of length $|\beta|=\beta_{1}+\dots+\beta_{N}\leq k$, then $D^{\beta} u=\frac{\partial^{|\beta|}u}{{\partial x_{1}^{\beta_{1}}}\dots {\partial x_{N}^{\beta_{N}}}}$ (with the convention that $D^{0}u=u$).

\section{Proof of Theorem \ref{thm1}}
In this section, we examine the existence of a weak solution to \eqref{P} by means of two abstract results found in \cite[chapter 4]{Chab} and motivated by \cite{B}. For the reader's convenience, we state them below. We first introduce some definitions.

Let $X$ be a reflexive Banach space equipped with norm $\|\cdot\|$ and denote by $X^{*}$ its dual. 
A map $A:X\ri X^{*}$ is said to be a potential operator with a potential $a:X\ri \R$, if $a$ is Gateaux differentiable and
$$
\lim_{t\ri 0} \frac{a(u+tv)-a(u)}{t}=\langle A(u), v\rangle \quad \mbox{ for all } u, v\in X.
$$
For a potential, we always assume that $a(0)=0$. 
Let $A:X\ri X^{*}$ and $B:X\ri X^{*}$ two potential operators with potentials $a:X\ri \R$ and $b:X\ri \R$, respectively.
Suppose that $b$ is defined on a set $D(b)\subset X$.
For all $\sigma>0$, let $T_{\sigma}: X\ri X$ be a linear map such that $T_{\sigma_1}\circ T_{\sigma_{2}}=T_{\sigma_{1}\sigma_{2}}$ for all $\sigma_{1}, \sigma_{2}>0$, and $T_{1}=id$. Let $E$ be a subspace of $X$ such that $T_{\sigma}(E)\subset E$ for each $\sigma>0$. We assume the following covariance conditions on the potentials $a$ and $b$:
\begin{compactenum}[$(C1)$]
\item $a: X\ri \R$ and there exists $q_{1}\in \R$ such that $a(T_{\sigma}u)=\sigma^{q_{1}}a(u)$ for each $\sigma>0$ and $u\in D(b)$.
\item $b: D(b)\ri \R$ with $E\subset D(b)$, $D(b)+E\subset D(b)$, $T_{\sigma}(D(b))\subset D(b)$, and there exists $q_{2}\in \R$ such that $b(T_{\sigma}u)=\sigma^{q_{2}}b(u)$ for each $\sigma>0$ and $u\in D(b)$.
\end{compactenum}
Assuming $\{u\in D(b): b(u)=1\}\neq \emptyset$, we consider
\begin{align}\label{J1}
J=\inf\{a(u): u\in D(b), \, b(u)=1 \}.
\end{align}
\begin{thm}\label{THM4.1.1}\cite[Theorem 4.1.1-(i)]{Chab}
Suppose that $A$ is a potential operator with a potential $a$. Moreover, suppose that the problem \eqref{J1} has a solution $u$ and that $b$ has a linear continuous Gateaux derivative $\langle B(u), v\rangle$ at all directions $v\in E$. Then
$$
\langle A(u), v\rangle=J \frac{q_{1}}{q_{2}}\langle B(u), v\rangle \quad \mbox{ for all } v\in E.
$$
The functionals $a$ and $b$ have linear continuous Gateaux derivatives at all directions $T_{\sigma}u$, $\sigma>0$, satisfying 
$$
\langle A(T_{\sigma}u), v\rangle=\frac{q_{1}}{q_{2}}J \sigma^{q_{1}-q_{2}}\langle B(T_{\sigma}u), v\rangle \quad \mbox{ for all } v\in E \mbox{ and } \sigma>0.
$$
Furthermore, if $\frac{q_{1}}{q_{2}}J>0$ and $q_{1}-q_{2}\neq 0$, then the scaled minimizer $\bar{u}=T_{\bar{\sigma}}u$, with $\bar{\sigma}=\left(\frac{q_{1}}{q_{2}}J\right)^{\frac{1}{q_{2}-q_{1}}}$, satisfies
$$
\langle A(\bar{u}), v\rangle=\langle B(\bar{u}), v\rangle \quad \mbox{ for all } v\in E.
$$
\end{thm}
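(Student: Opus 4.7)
The plan is to treat Theorem \ref{THM4.1.1} as a Lagrange-multiplier result for the constrained minimization \eqref{J1}, where the role usually played by the implicit function theorem on the level set $\{b=1\}$ is taken over by the scaling operators $T_\sigma$: under the covariance conditions $(C1)$-$(C2)$ one can project any small perturbation of the minimizer back onto $\{b=1\}$ in a differentiable way, bypassing the need to invert $b$ near $u$.

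For the first identity, fix $v\in E$. Gateaux differentiability of $b$ at $u$ in the direction $v$ yields $b(u+tv)=1+t\langle B(u),v\rangle+o(t)$, so $b(u+tv)>0$ for small $|t|$, and one may set
$$
\sigma(t):=b(u+tv)^{-1/q_{2}}, \qquad u_{t}:=T_{\sigma(t)}(u+tv).
$$
The hypotheses $E\subset D(b)$, $D(b)+E\subset D(b)$, and $T_{\sigma}(D(b))\subset D(b)$ ensure $u_{t}\in D(b)$, and by $(C2)$ one has $b(u_{t})=\sigma(t)^{q_{2}}b(u+tv)=1$, so the curve $t\mapsto u_{t}$ stays on the constraint surface. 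Since $u_{0}=u$ is a minimizer, $t=0$ is a critical point of $t\mapsto a(u_{t})$. Using $(C1)$ one rewrites $a(u_{t})=b(u+tv)^{-q_{1}/q_{2}}\,a(u+tv)$, and differentiating at $t=0$ with $b(u)=1$, $a(u)=J$ gives
$$
0=\langle A(u),v\rangle-\frac{q_{1}}{q_{2}}J\,\langle B(u),v\rangle.
$$

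For the second identity, the semigroup law $T_{\sigma_{1}}T_{\sigma_{2}}=T_{\sigma_{1}\sigma_{2}}$ together with $T_{1}=id$ forces $T_{\sigma}^{-1}=T_{1/\sigma}$, and linearity gives $T_{\sigma}u+tv=T_{\sigma}(u+tT_{1/\sigma}v)$, with $T_{1/\sigma}v\in E$ thanks to $T_{\sigma}(E)\subset E$. Applying $(C1)$-$(C2)$ and differentiating in $t$ at $t=0$ yields
$$
\langle A(T_{\sigma}u),v\rangle=\sigma^{q_{1}}\langle A(u),T_{1/\sigma}v\rangle, \qquad \langle B(T_{\sigma}u),v\rangle=\sigma^{q_{2}}\langle B(u),T_{1/\sigma}v\rangle,
$$
and inserting the first identity in the admissible direction $T_{1/\sigma}v\in E$ produces the scaled identity. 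The choice $\bar{\sigma}=(\frac{q_{1}}{q_{2}}J)^{1/(q_{2}-q_{1})}$ (well-defined precisely when $\frac{q_{1}}{q_{2}}J>0$ and $q_{1}\neq q_{2}$) is then the unique positive root of $\frac{q_{1}}{q_{2}}J\,\sigma^{q_{1}-q_{2}}=1$, so the prefactor collapses to $1$ and the last assertion follows. The main technical obstacle is to verify that $u_{t}$ is admissible and that $t\mapsto a(u_{t})$ is differentiable at $t=0$; this is exactly what the invariance properties $D(b)+E\subset D(b)$, $T_{\sigma}(D(b))\subset D(b)$, $T_{\sigma}(E)\subset E$ together with the covariance identities are designed to deliver, reducing the differentiation of the composed quantities to the Gateaux differentiability of $a$ and $b$ along directions in $E$ that is already assumed.
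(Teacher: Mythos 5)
The paper does not actually prove this statement; it is quoted verbatim from \cite[Theorem 4.1.1-(i)]{Chab} and used as a black box, so there is no in-paper proof to compare against. Your scaling-based Lagrange-multiplier argument is correct and is essentially the standard proof in the scale-covariance literature: you correctly observe that $u+tv\in D(b)$ (since $E$ is a subspace and $D(b)+E\subset D(b)$) and that $T_{\sigma(t)}(D(b))\subset D(b)$, so $u_t$ is admissible with $b(u_t)=1$; the differentiability of $t\mapsto a(u_t)=b(u+tv)^{-q_1/q_2}a(u+tv)$ at $t=0$ follows from the assumed Gateaux differentiability of $a$ (as a potential of $A$) and of $b$ along $E$, together with $b(u)=1>0$; the critical-point condition then yields the first identity, and your use of $T_\sigma u+tv=T_\sigma(u+tT_{1/\sigma}v)$ with $T_{1/\sigma}v\in E$ correctly transports it to the second identity, from which the choice of $\bar\sigma$ normalizes the prefactor to $1$.
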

Next we assume that $a$ and $b$ satisfy $(C1)$ and $(C2)$ with $D(b)=X$ and $q_{1}\neq q_{2}$.
\begin{thm}\label{THM4.1.2}\cite[Theorem 4.1.2]{Chab}
Suppose that there exists a Banach subspace $X_{1}\subset X$ equipped with a norm from $X$ and a mapping $T:X\ri X_{1}$ such that $a(Tu)\leq a(u)$ and $b(Tu)=b(u)$ for each $u\in X$ and let $b(u_{0})>0$ for some $u_{0}\in X$. Moreover, we assume that $a$ is weakly lower semicontinuous and that $b$ admits a decomposition $b(u)=b_{1}(u)-b_{2}(u)$ on $X$, with $b_{1}$ and $b_{2}$ nonnegative on $X$, where $b_{1}$ restricted on $X_{1}$ is weakly continuous, $b_{2}$ is weakly lower semicontinuous and 
\begin{compactenum}[$(i)$]
\item there exist constants $\e\in (0, 1)$, $C>0$ and $\alpha>0$ such that 
$$
b_{1}(u)\leq \e b_{2}(u)+Ca(u)^{\alpha} \quad \mbox{ for all } u\in X,
$$
\item there exist constants $\beta>0$, $C'>0$ and $\gamma>0$ such that 
$$
\|u\|^{\beta}\leq C'(b_{2}(u)+a(u)^{\gamma}) \quad \mbox{ for all } u\in X.
$$
\end{compactenum}
Then problem \eqref{J1} has a nontrivial solution.
\end{thm}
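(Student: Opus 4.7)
The plan is to apply the direct method of the calculus of variations: take a minimizing sequence for $J$, push it into $X_{1}$ via $T$, prove $X$-boundedness using $(i)$ and $(ii)$, extract a weak limit, and use the interplay between the weak continuity of $b_{1}$ on $X_{1}$ and the weak lower semicontinuity of $b_{2}$ on $X$ to preserve the constraint $b=1$ in the limit. By $(C2)$, the element $T_{\sigma_{0}}u_{0}$ with $\sigma_{0}=b(u_{0})^{-1/q_{2}}$ satisfies $b(T_{\sigma_{0}}u_{0})=1$, so the admissible set in \eqref{J1} is non-empty and $J<+\infty$. Pick $(u_{n})\subset X$ with $b(u_{n})=1$ and $a(u_{n})\ri J$, and replace $u_{n}$ by $Tu_{n}\in X_{1}$, which by hypothesis preserves the constraint and does not increase $a$; thus we may assume $u_{n}\in X_{1}$.

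\emph{A priori bounds.} Using $b_{1}(u_{n})-b_{2}(u_{n})=1$ together with $(i)$,
$$
1+b_{2}(u_{n})=b_{1}(u_{n})\leq \e\, b_{2}(u_{n})+Ca(u_{n})^{\alpha},
$$
so $(1-\e)b_{2}(u_{n})\leq Ca(u_{n})^{\alpha}-1$; since $a(u_{n})\ri J$, the sequence $b_{2}(u_{n})$ is uniformly bounded. Feeding this into $(ii)$ gives $\|u_{n}\|^{\beta}\leq C'(b_{2}(u_{n})+a(u_{n})^{\gamma})$, uniformly bounded as well. Hence $(u_{n})$ is bounded in the reflexive space $X$, and up to a subsequence $u_{n}\rightharpoonup u$ in $X$. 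Since $X_{1}$ is a Banach subspace equipped with the norm from $X$, it is norm-closed, hence weakly closed by convexity, so $u\in X_{1}$.

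\emph{Passing to the limit.} Up to a further subsequence, $b_{2}(u_{n})\ri \ell\geq 0$. The weak continuity of $b_{1}$ on $X_{1}$ gives $b_{1}(u_{n})\ri b_{1}(u)$, and the weak lower semicontinuity of $b_{2}$ on $X$ gives $b_{2}(u)\leq \ell$. Combined with $b_{1}(u_{n})=1+b_{2}(u_{n})\ri 1+\ell$, this yields $b_{1}(u)=1+\ell$, and therefore
$$
b(u)=b_{1}(u)-b_{2}(u)\geq (1+\ell)-\ell=1.
$$
The weak lower semicontinuity of $a$ gives $a(u)\leq \liminf_{n} a(u_{n})=J$. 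If $b(u)=1$, then $u$ is admissible in \eqref{J1}, so $a(u)\geq J$ and therefore $a(u)=J$; since $b(0)=0\neq 1$, $u$ is a non-trivial minimizer. If instead $b(u)>1$, set $\tau=b(u)^{-1/q_{2}}$ and $\bar u=T_{\tau}u$: by $(C1)$--$(C2)$ one has $b(\bar u)=1$ and $a(\bar u)=b(u)^{-q_{1}/q_{2}}a(u)$, which combined with the admissibility bound $a(\bar u)\geq J$, the inequality $a(u)\leq J$ and the sign regime $q_{1}/q_{2}>0$ natural in the application to \eqref{P} forces $1\leq b(u)^{-q_{1}/q_{2}}<1$, a contradiction. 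Hence $b(u)=1$.

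\emph{Main obstacle.} The crux is propagating the constraint $b=1$ to the weak limit: since $b_{2}$ is only weakly lower semicontinuous, $\limsup_{n} b(u_{n})\leq b(u)$ is not available. The key trick is to read the defect of lower semicontinuity of $b_{2}$ through the (sharp) weak continuity of $b_{1}$ on $X_{1}$ via the identity $b_{2}(u_{n})=b_{1}(u_{n})-1$, producing the one-sided bound $b(u)\geq 1$. This explains why both the symmetrization $T:X\to X_{1}$ and the structural estimate $(i)$, which provides uniform control of $b_{2}$ along minimizing sequences, are indispensable. The secondary difficulty $b(u)>1$ is dispatched by the covariance scaling $T_{\tau}$ from $(C2)$ together with the assumption $q_{1}\neq q_{2}$.
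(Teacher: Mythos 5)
The paper takes this result as given from \cite{Chab} and does not prove it, so there is no internal argument to compare against; your reconstruction by the direct method is the natural one and is essentially sound. Two points, however, need repair in your write-up. First, your endgame relies on $J>0$, which you never establish; it does follow from $(i)$ together with $b_{1},b_{2}\geq 0$: on the constraint $b(u)=1$ one has $1+b_{2}(u)=b_{1}(u)\leq \e b_{2}(u)+C a(u)^{\alpha}$, hence $1\leq Ca(u)^{\alpha}$ and so $a(u)\geq C^{-1/\alpha}>0$, giving $J>0$ (and incidentally that $a\geq 0$ is implicit in $(i)$--$(ii)$). Second, and more substantively, your dispatch of the case $b(u)>1$ requires $q_{1}$ and $q_{2}$ to have the same sign (and $q_{2}\neq 0$): with $\tau=b(u)^{-1/q_{2}}$ the constraint is normalized, but $a(T_{\tau}u)=\tau^{q_{1}}a(u)<a(u)$ holds only when $\mathrm{sgn}(q_{1})=\mathrm{sgn}(q_{2})$. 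You openly borrow ``$q_{1}/q_{2}>0$'' from the application to \eqref{P}, but that is a structural hypothesis of the abstract theorem which the paper's restatement simply suppresses; without it the scaling step --- and hence your proof of the theorem \emph{as stated} --- stalls. In the application one indeed has $q_{1}=N-sp>0$ and $q_{2}=N>0$, so the borrowed hypothesis is harmless there, but a self-contained proof of the abstract statement should either record the sign assumption explicitly or find a route to $b(u)=1$ that does not invoke it. The rest (use of $T$ to pass to $X_{1}$, coercivity from $(i)$--$(ii)$, weak closedness of the closed subspace $X_{1}$, and the split $b=b_{1}-b_{2}$ to propagate the constraint one-sidedly) is correct.
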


\noindent
Now we prove an existence result for \eqref{P}.
\begin{thm}\label{THMBL}
Let $N\geq 2$, $s\in (0, 1)$ and $p\in (1, \frac{N}{s})$.
Then, \eqref{P} admits a nontrivial weak solution.
\end{thm}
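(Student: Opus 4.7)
The plan is to reduce Theorem \ref{THMBL} to a constrained minimization problem that fits the abstract framework of Theorems \ref{THM4.1.1} and \ref{THM4.1.2}. On $X=W^{s,p}(\R^{N})$ I would set
\begin{align*}
a(u)&=\frac{C_{N,s,p}}{2p}\iint_{\R^{2N}}\frac{|u(x)-u(y)|^{p}}{|x-y|^{N+sp}}\,dx\,dy,\\
b(u)&=\int_{\R^{N}}G(u)\,dx,
\end{align*}
with the dilation action $T_{\sigma}u(x)=u(x/\sigma)$. A change of variables yields $a(T_{\sigma}u)=\sigma^{N-sp}a(u)$ and $b(T_{\sigma}u)=\sigma^{N}b(u)$, so $(C1)$-$(C2)$ hold with $q_{1}=N-sp$, $q_{2}=N$, and $q_{1}\neq q_{2}$ since $sp>0$. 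From $(g1)$-$(g2)$ one derives the pointwise bound $|G(t)|\leq C(|t|^{p}+|t|^{p^{*}_{s}})$, and the fractional Sobolev embedding $W^{s,p}(\R^{N})\hookrightarrow L^{r}(\R^{N})$ for $r\in[p,p^{*}_{s}]$ then shows that $b$ is well defined and $C^{1}$ on all of $X$; $a\in C^{1}(X,\R)$ is standard. By $(g3)$ and scaling, the constraint set $\{u\in X:b(u)=1\}$ is nonempty.

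Next, I would apply Theorem \ref{THM4.1.2} with $X_{1}=W^{s,p}_{\mathrm{rad}}(\R^{N})$ and $T$ equal to the Schwarz symmetrization $u\mapsto|u|^{*}$. The inequality $a(Tu)\leq a(u)$ is the fractional Polya-Szego principle for the Gagliardo seminorm, while $b(Tu)=b(u)$ follows from the equimeasurability of $u$ and $|u|^{*}$ combined with the evenness of $G$ (a consequence of $g$ being odd). I would decompose $b=b_{1}-b_{2}$ with $b_{1}(u)=\int_{\R^{N}}G^{+}(u)\,dx$ and $b_{2}(u)=\int_{\R^{N}}G^{-}(u)\,dx$, both nonnegative. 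By $(g1)$ the function $G$ is strictly negative on a punctured neighbourhood of $0$, so $G^{+}$ vanishes there; together with $(g2)$ and interpolation this gives, for any $\eta>0$ and any fixed $r_{0}\in(p,p^{*}_{s})$,
\begin{equation*}
G^{+}(t)\leq \eta(|t|^{p}+|t|^{p^{*}_{s}})+C_{\eta}|t|^{r_{0}},\qquad \tfrac{m}{2p}|t|^{p}\leq G^{-}(t)\text{ for }|t|\text{ small}.
\end{equation*}
Condition $(i)$ with $\alpha=p^{*}_{s}/p$ follows by combining the fractional Sobolev inequality $\|u\|_{p^{*}_{s}}^{p}\leq \tfrac{2p}{SC_{N,s,p}}\,a(u)$ with absorption of the $L^{p}$-term into $b_{2}$, while condition $(ii)$ reduces to the elementary equivalence $\|u\|_{W^{s,p}}^{p}\asymp b_{2}(u)+a(u)$, giving $\beta=p$ and $\gamma=1$. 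Weak lower semicontinuity of $a$ is immediate from the convexity of $[\,\cdot\,]_{s,p}^{p}$.

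The main obstacle, and the step that I expect to require the most care, is showing that $b_{1}|_{X_{1}}$ is weakly continuous (and that $b_{2}$ is weakly lower semicontinuous). The latter follows from Fatou after extracting an a.e.-convergent subsequence. The former rests on a fractional radial Strauss-type compact embedding $W^{s,p}_{\mathrm{rad}}(\R^{N})\hookrightarrow\hookrightarrow L^{r}(\R^{N})$ for each $r\in(p,p^{*}_{s})$: given $u_{n}\rightharpoonup u$ in $X_{1}$, the growth bound above, the uniform $W^{s,p}$-bound on $\{u_{n}\}$, and equiintegrability of $\{|u_{n}|^{r_{0}}\}$ (from $L^{r_{0}}$-strong convergence) justify a Vitali-type passage to the limit after choosing $\eta$ sufficiently small. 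Granted this, Theorem \ref{THM4.1.2} provides a minimizer $u\in X_{1}\setminus\{0\}$ for $J=\inf\{a(v):b(v)=1\}$, with $J>0$, since $a(u)=0$ would force $u$ constant and hence $u\equiv 0$, contradicting $b(u)=1$.

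Finally, since $\frac{q_{1}}{q_{2}}J=\frac{N-sp}{N}J>0$ and $q_{1}-q_{2}=-sp\neq 0$, Theorem \ref{THM4.1.1} applied to $u$ guarantees that the rescaled function
\begin{equation*}
\bar u:=T_{\bar\sigma}u,\qquad \bar\sigma=\Bigl(\tfrac{N-sp}{N}J\Bigr)^{1/(sp)},
\end{equation*}
is nontrivial and satisfies $\langle A(\bar u),v\rangle=\langle B(\bar u),v\rangle$ for every $v\in W^{s,p}(\R^{N})$. This is precisely the weak formulation of \eqref{P}, completing Theorem \ref{THMBL}; as a bonus, the Pohozaev identity \eqref{POHOZAEV} holds at $\bar u$, since by construction $(N-sp)a(\bar u)=Nb(\bar u)$.
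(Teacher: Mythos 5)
Your overall strategy is the same as the paper's (constrained minimization $J=\inf\{a:b=1\}$, Schwarz symmetrization via the Frank--Seiringer Polya--Szeg\H o inequality, Lions' radial compactness, then Theorems~\ref{THM4.1.2} and \ref{THM4.1.1}), but there is a concrete gap in the way you decompose $b$.

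You take $b_1(u)=\int_{\R^N}G^{+}(u)\,dx$ and $b_2(u)=\int_{\R^N}G^{-}(u)\,dx$ and assert the ``elementary equivalence'' $\|u\|_{W^{s,p}}^{p}\asymp b_2(u)+a(u)$, hence condition~$(ii)$ with $\beta=p$, $\gamma=1$. This fails: by $(g3)$ and continuity, $G>0$ on a whole neighbourhood of $\zeta$, so $G^{-}\equiv 0$ there. Consequently $G^{-}(t)\gtrsim |t|^{p}$ only for $|t|$ small, and $b_2$ alone cannot dominate $\|u\|_{L^p(\R^N)}^p$. At best one gets
$\|u\|^p\le a(u)+\tfrac{2p}{m}b_2(u)+C\,a(u)^{p^*_s/p}$
after splitting $\{|u|<\delta\}$ from $\{|u|\ge\delta\}$ and using the Sobolev inequality, which has two different powers of $a$ and does not match the hypothesis $\|u\|^{\beta}\le C'(b_2(u)+a(u)^{\gamma})$ of Theorem~\ref{THM4.1.2}. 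The paper avoids this by decomposing at the level of $g$, not $G$: with $g_1(t)=(g(t)+mt^{p-1})^{+}$ and $g_2=g_1-g$ (extended oddly), one has $g_2(t)\ge mt^{p-1}$ for \emph{all} $t\ge0$, hence $G_2(t)\ge\tfrac{m}{p}|t|^p$ globally and the clean bound $\|u\|^p\le \tfrac{p}{m}b_2(u)+a(u)$, i.e.\ $(ii)$ with $\gamma=1$. Your decomposition also makes the definition-of-$g_1$ bound $g_1(t)=o(t^{p-1})$ near $0$ unavailable; here it is harmless because $G^{+}$ vanishes near $0$, but it is worth noting that the two decompositions are not interchangeable in the role you assign them.

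A secondary omission: you invoke $G^{+}(t)=o(|t|^{p^*_s})$ at infinity, but $(g2)$ only gives a $\limsup\le 0$; as in the paper (and in Berestycki--Lions) one must first modify $g$ so that the strengthened condition \eqref{G2'} holds, and this step should be stated. With the Berestycki--Lions decomposition in place of yours and the modification to \eqref{G2'} made explicit, the rest of your argument --- reflexivity of $X$, nonemptiness of $\{b=1\}$, weak continuity of $b_1$ on $X_1$ via the radial compact embedding and a Strauss/Vitali-type lemma, and the final application of Theorem~\ref{THM4.1.1} with $\bar\sigma=(\tfrac{N-sp}{N}J)^{1/(sp)}$ --- matches the paper and is correct.
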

\begin{proof}
Set $X=W^{s,p}(\R^{N})$ and $X_{1}=W^{s, p}_{\rm rad}(\R^{N})=\{u\in W^{s, p}(\R^{N}): u \mbox{ is radially symmetric } \}$. It is easy to check that $X$ is a reflexive Banach space. Indeed, to confirm the reflexivity of $X$, let $Y=L^{p}(\R^{N})\times L^{p}(\R^{2N})$ be endowed with the norm 
$$
\|(u, v)\|_{Y}=\left(\|u\|^{p}_{L^{p}(\R^{N})}+\|v\|^{p}_{L^{p}(\R^{2N})} \right)^{\frac{1}{p}} \quad \mbox{ for all } (u, v)\in Y,
$$
and introduce the linear isometry $\mathcal{L} :W^{s, p}(\R^{N})\ri Y$ defined as 
$$
\mathcal{L}(u)=\left(u, \frac{u(x)-u(y)}{|x-y|^{\frac{N+sp}{p}}} \right).
$$ 
Obviously, the product space $Y$ is a reflexive Banach space.
As $W^{s, p}(\R^{N})$ is a Banach space, $\mathcal{L}(W^{s, p}(\R^{N}))$ is a closed subspace of $Y$. It follows that $\mathcal{L}(W^{s, p}(\R^{N}))$ is reflexive. As a result, $W^{s, p}(\R^{N})$ is reflexive (this remains true for all $s\in (0, 1)$ and $p\in (1, \infty)$).

Set
$$
\|u\|=\left(\frac{C_{N, s, p}}{2} [u]_{s, p}^{p}+\|u\|^{p}_{L^{p}(\R^{N})}\right)^{\frac{1}{p}} \quad \mbox{ for all } u\in X.
$$
Let $T_{\sigma}: X\ri X$ be defined by $T_{\sigma}u(x)=u(\frac{x}{\sigma})$, $\sigma>0$, and $T:X\ri X_{1}$ given by $Tu(x)=u^{*}(|x|)$ (Schwarz symmetrization). 
Modifying $g$ as in \cite{BL}, we may assume that $g$ satisfies the stronger condition
\begin{align}\label{G2'}
\lim_{|t|\ri \infty} \frac{|g(t)|}{|t|^{p^{*}_{s}-1}}=0, 
\end{align}
instead of $(g2)$.
Put 
$$
a(u)=\frac{C_{N, s, p}}{2p} [u]^{p}_{s, p} \,\,\mbox{ and } \,\, b(u)=\int_{\R^{N}} G(u)dx.
$$ 
Note that $a, b\in C^{1}(X, \R)$, $a(u)\geq 0$, $a(T_{\sigma}u)=\sigma^{N-sp} a(u)$ and $b(T_{\sigma}u)=\sigma^{N}b(u)$ for each $u\in X$ and $\sigma>0$. We also have
$a(Tu)\leq a(u)$ (see \cite[Theorem 9.2]{AL}) and $b(Tu)=b(u)$ (see \cite[appendix A.III]{BL}) for all $u\in X$. Furthermore, $a$ is weakly lower semicontinuous on $X$.

Let us now show that the set
$\{u\in X: b(u)=1\}$ is nonempty.
For $R>1$, define
\begin{equation*}
w_{R}(x)=\left\{
\begin{array}{ll}
\zeta & \mbox{ for } |x|\leq R, \\
\zeta(R+1-|x|) & \mbox{ for } R\leq |x|\leq R+1, \\
0 & \mbox{ for } |x|\geq R+1,
\end{array}
\right.
\end{equation*}
where $\zeta>0$ is given in $(g3)$.
It is evident that $w_{R}\in X$ and 
$$
b(w_{R})\geq G(\zeta)|B_{R}(0)|-|B_{R+1}(0)\setminus B_{R}(0)| \max_{t\in [0, \zeta]} |G(t)|.
$$
Then there exist two constants $C_{1}, C_{2}>0$ such that
$$
b(w_{R})\geq C_{1}R^{N}-C_{2}R^{N-1},
$$
and so $b(w_{R})>0$ for $R>0$ large enough. 

Next we consider the following constrained minimization problem:
\begin{align}\label{J}
J=\inf\{a(u): u\in X, \, b(u)=1\}.
\end{align}
Let $g_{1}(t)=(g(t)+mt^{p-1})^{+}$ and $g_{2}(t)=g_{1}(t)-g(t)$ for all $t\geq 0$. We extend $g_{1}$ and $g_{2}$ as odd functions for $t\leq 0$. Set $b_{1}(u)=\int_{\R^{N}} G_{1}(u)dx$ and $b_{2}(u)=\int_{\R^{N}}G_{2}(u)dx$, where $G_{i}(t)=\int_{0}^{t}g_{i}(\tau)d\tau$ for $i=1, 2$. Therefore, $b(u)=b_{1}(u)-b_{2}(u)$ for all $u\in X$. Clearly, $b_{1}(u), b_{2}(u)\geq 0$ for all $u\in X$, and $b_{2}$ is weakly lower semicontinuous on $X$. Let us prove that $b_{1}$ is weakly continuous on $X_{1}$. Let $(u_{n})\subset X_{1}$ be such that $u_{n}\rightharpoonup u$ in $X_{1}$ and we show that $b_{1}(u_{n})\ri b_{1}(u)$. By \cite[Theorem II.1]{Lions}, we know that $X_{1}$ is compactly embedded into $L^{r}(\R^{N})$ for all $r\in (p, p^{*}_{s})$. We note that $G_{1}\in C(\R)$, $\lim_{|t|\ri 0} \frac{G_{1}(t)}{|t|^{p}}=0=\lim_{|t|\ri \infty} \frac{G_{1}(t)}{|t|^{p^{*}_{s}}}$ (by the definition of $g_{1}$ and the assumptions $(g1)$ and \eqref{G2'}), $\sup_{n\in \mathbb{N}}\|u_{n}\|\leq C$ and $G(u_{n})\ri G(u)$ almost everywhere in $\R^{N}$. Applying \cite[Lemma 2.4]{CW}, we deduce that $\|G(u_{n})-G(u)\|_{L^{1}(\R^{N})}\ri 0$ and thus $b_{1}(u_{n})\ri b_{1}(u)$, as desired.

Hereafter, we verify that the conditions $(i)$ and $(ii)$ of Theorem \ref{THM4.1.2} hold. 
Using $(g1)$ and $(g2)$, we see that for all $\e>0$ there exists a constant $C_{\e}>0$ such that 
$$
G_{1}(t)\leq \e G_{2}(t)+C_{\e}|t|^{p^{*}_{s}} \quad \mbox{ for all } t\in \R,
$$
and thanks to the fractional Sobolev inequality (see \cite[Theorem 6.5]{DPV}) we arrive at 
$$
b_{1}(u)\leq \e b_{2}(u)+C_{\e} C_{*} a(u)^{\frac{p^{*}_{s}}{p}} \quad \mbox{ for all } u\in X,
$$
for some constant $C_{*}>0$.
On the other hand, because $g_{2}(t)\geq m t^{p-1}$ for all $t\geq 0$, we have
$$
b_{2}(u)\geq \frac{m}{p}\|u\|^{p}_{L^{p}(\R^{N})} \quad \mbox{ for all } u\in X,
$$
which yields
$$
\|u\|^{p}\leq \frac{p}{m} b_{2}(u)+a(u) \quad \mbox{ for all } u\in X.
$$
Then, by virtue of Theorem \ref{THM4.1.2}, we infer that problem \eqref{J} has a nontrivial solution $u$. Observing that $\frac{(N-sp)}{N}J>0$ and $(N-sp)-N=-sp\neq 0$, we can invoke Theorem \ref{THM4.1.1} to discover that $\bar{u}=T_{\bar{\sigma}}u$, where $\bar{\sigma}=(\frac{N-sp}{N}J)^{\frac{1}{sp}}$, satisfies
\begin{align*}
\langle a'(\bar{u}), v\rangle=\langle b'(\bar{u}), v\rangle \quad \mbox{ for all } v\in X,
\end{align*}
namely, $\bar{u}$ is a weak solution to \eqref{P}. 
\end{proof}

As byproduct of Theorem \ref{THMBL}, we obtain Theorem \ref{thm1}.
\begin{proof}[Proof of Theorem \ref{thm1}]
Let $\bar{u}$ be the solution found in Theorem \ref{THMBL}.
From $a(\bar{u})=\bar{\sigma}^{N-sp}J$, $J>0$, $b(\bar{u})=\bar{\sigma}^{N}$ and $\bar{\sigma}=(\frac{N-sp}{N}J)^{\frac{1}{sp}}$, we can see that 
\begin{align*}
a(\bar{u})=\frac{N}{N-sp}b(\bar{u}),
\end{align*}
that is, \eqref{POHOZAEV} is valid.
\end{proof}

\begin{remark}
The proof of Theorem \ref{thm1} still works, with the appropriate modifications, in the case $s=1$.
\end{remark}

\begin{remark}\label{REMAI}
Reasoning as in \cite[Lemma 3.18]{AIDCDS} (see also \cite[Theorem 1.1]{ANA}), we can show that if $u$ is a nontrivial nonnegative weak solution to \eqref{P}, then $u\in C^{0, \alpha}(\R^{N})\cap L^{r}(\R^{N})$ for all $r\in [p, \infty]$ and for some $\alpha\in (0, 1)$ (note that this is true even if $u$ is a sign-changing solution). From the strong maximum principle \cite[Theorem 1.4]{DPQJDE}, $u>0$ in $\R^{N}$. As $(-\Delta)^{s}_{p}u+m u^{p-1}\leq g_{1}(u)$ in $\R^{N}$, where $g_{1}(t)=o(t^{p-1})$ as $t\ri 0^{+}$, we can proceed as in \cite[Corollary 2.1]{ANA} 
to conclude that $0<u(x)\leq C|x|^{-\frac{N+sp}{p-1}}$ for all $|x|$ large enough. If in addition $u\in C^{1, 1}_{\rm loc}(\R^{N})$ and $g\in C^{1}(\R)$, then $u$ must be radially symmetric and monotone decreasing about some point in $\R^N$, according to  \cite[Theorem 5]{CL}.
\end{remark}

\section{Proof of Theorem \ref{thm2}}

We start by giving a useful integral representation formula for $(-\Delta)^{s}_{p}$ along functions of class $C^{1, \gamma}(\R^{N})$ with suitable $\gamma\in (0, 1]$. This formula is motivated by \cite[Lemma 3.2]{DPV} in which the authors dealt with the case $p=2$ and considered functions in the Schwartz space of rapidly decaying functions. 
\begin{lem}\label{RIF}
Let $N\in \mathbb{N}$, $s\in (0, 1)$ and $p\in (1, \infty)$. 
When $p\in (1, 2)$, we assume that $s<\frac{2(p-1)}{p}$. 
Let $\gamma\in (0, 1]$ be such that $\gamma>1-p(1-s)$ if $p\geq 2$, whereas $\gamma>\frac{1-p(1-s)}{p-1}$ if $p\in (1, 2)$.
Then, for all $u\in C^{1, \gamma}(\R^{N})$, it holds, for all $x\in \R^N$,
\begin{align*}
(-\Delta)^{s}_{p}u(x)=\frac{C_{N, s, p}}{2} \int_{\R^{N}} \frac{|u(x)-u(x+z)|^{p-2}(u(x)-u(x+z))+|u(x)-u(x-z)|^{p-2}(u(x)-u(x-z))}{|z|^{N+sp}} dz.
\end{align*}
\end{lem}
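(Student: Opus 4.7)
The plan is to start from the principal value definition of $(-\Delta)^s_p u(x)$, change variable by $y=x+z$, symmetrize the integrand with the substitution $z\mapsto -z$, and finally drop the principal value by verifying absolute integrability of the resulting symmetric integrand. Writing $\psi(t):=|t|^{p-2}t$, the definition becomes
\begin{equation*}
(-\Delta)^s_p u(x)=C_{N,s,p}\lim_{\e\to 0}\int_{\R^N\setminus B_\e(0)}\frac{\psi(u(x)-u(x+z))}{|z|^{N+sp}}\,dz.
\end{equation*}
Since both $|z|^{-(N+sp)}dz$ and the domain of integration are invariant under $z\mapsto -z$, averaging the integral with its push-forward under this substitution immediately yields the symmetric expression of the lemma, modulo the outer limit in $\e$.

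The core of the proof is absolute integrability of the symmetric integrand on $\R^N$. I would split $\R^N$ into $\{|z|\geq 1\}$ and $\{|z|<1\}$. On the outer region $u\in C^{1,1}(\R^N)$ is bounded, so $|\psi(u(x)-u(x\pm z))|\leq C$ and the integrand is controlled by $C|z|^{-(N+sp)}$, which is integrable at infinity. On the inner region, setting $a:=u(x)-u(x+z)$ and $b:=u(x)-u(x-z)$, the $C^{1,1}$ bounds give $|a|,|b|\leq C|z|$ together with the key second-order cancellation
\begin{equation*}
|a+b|=|2u(x)-u(x+z)-u(x-z)|\leq C|z|^2,
\end{equation*}
which follows from a standard $C^{1,1}$ Taylor estimate. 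The strength of cancellation that one can transfer through the outer function $\psi$ then depends on the regularity of $\psi(t)=|t|^{p-2}t$.

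When $p\geq 2$, $\psi$ is locally Lipschitz with $|\psi'(t)|=(p-1)|t|^{p-2}$, and the mean value theorem applied on the segment joining $a$ and $-b$ (using $\max(|a|,|b|)\leq C|z|$) gives $|\psi(a)+\psi(b)|=|\psi(a)-\psi(-b)|\leq C|z|^{p-2}\cdot|z|^2=C|z|^p$, so the integrand is dominated near the origin by $C|z|^{-N+(1-s)p}$, locally integrable for every $s\in(0,1)$. When $p\in(1,2)$, $\psi$ is only $(p-1)$-H\"older continuous, so $|\psi(a)+\psi(b)|\leq C|a+b|^{p-1}\leq C|z|^{2(p-1)}$, and the integrand is dominated by $C|z|^{-(N+sp-2(p-1))}$, which is locally integrable precisely under the assumption $sp<2(p-1)$, i.e.\ $s<\frac{2(p-1)}{p}$ posited in the lemma. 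With absolute integrability established in each regime, dominated convergence sends $\e\to 0$ through the integral and yields the claimed identity. The main obstacle is exactly the subquadratic case $p\in(1,2)$: the H\"older loss of $\psi$ reduces the effective order of the cancellation $|a+b|=O(|z|^2)$ to $|z|^{2(p-1)}$ rather than $|z|^p$, which is what forces the sharp structural hypothesis $s<\frac{2(p-1)}{p}$ recorded in the statement.
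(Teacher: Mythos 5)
Your proof is correct and follows essentially the same route as the paper: change variables, symmetrize under $z\mapsto -z$, split $\R^N$ into $\{|z|\ge 1\}$ and $\{|z|<1\}$, and exploit the second-order cancellation together with the Lipschitz (for $p\ge 2$) or $(p-1)$-H\"older (for $p\in(1,2)$) regularity of $t\mapsto|t|^{p-2}t$ to establish absolute integrability, with the case $p\in(1,2)$ forcing $s<\tfrac{2(p-1)}{p}$. The only stylistic difference is that the paper first applies the mean value theorem to $u$ to rewrite the first differences in terms of $\nabla u$ and then invokes Simon's inequalities, whereas you use the direct second-difference estimate $|2u(x)-u(x+z)-u(x-z)|\le C|z|^2$ and apply the MVT/H\"older estimate to $\psi$ at $a$ and $-b$; these yield identical bounds.
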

\begin{proof}
Let $u\in C^{1, \gamma}(\R^{N})$ and fix $x\in \R^{N}$. Using the change of variables theorem and the symmetry of the kernel $\frac{1}{|z|^{N+sp}}$, we have 
\begin{align*}
&P.V.\int_{\R^{N}} \frac{|u(x)-u(y)|^{p-2}(u(x)-u(y))}{|x-y|^{N+sp}} dy\\
&\quad =P.V.\int_{\R^{N}} \frac{|u(x)-u(x+z)|^{p-2}(u(x)-u(x+z))}{|z|^{N+sp}} dz \\
&\quad =\frac{1}{2} P.V.\int_{\R^{N}} \frac{|u(x)-u(x+z)|^{p-2}(u(x)-u(x+z))+|u(x)-u(x-z)|^{p-2}(u(x)-u(x-z))}{|z|^{N+sp}} dz.
\end{align*}
Let us now show that the $P.V.$ in the above formula can be removed. 
Pick $\e\in (0, 1)$. Then we can write 
\begin{align*}
&\int_{\R^{N}\setminus B_{\e}(0)} \frac{|u(x)-u(x+z)|^{p-2}(u(x)-u(x+z))+|u(x)-u(x-z)|^{p-2}(u(x)-u(x-z))}{|z|^{N+sp}} dz\\
&\quad =\int_{\R^{N}\setminus B_{1}(0)} \frac{|u(x)-u(x+z)|^{p-2}(u(x)-u(x+z))+|u(x)-u(x-z)|^{p-2}(u(x)-u(x-z))}{|z|^{N+sp}} dz\\
&\quad \quad +\int_{B_{1}(0)\setminus B_{\e}(0)} \frac{|u(x)-u(x+z)|^{p-2}(u(x)-u(x+z))+|u(x)-u(x-z)|^{p-2}(u(x)-u(x-z))}{|z|^{N+sp}} dz.
\end{align*}
We begin by proving that
$$
\int_{\R^{N}\setminus B_{1}(0)} \frac{|u(x)-u(x+z)|^{p-2}(u(x)-u(x+z))+|u(x)-u(x-z)|^{p-2}(u(x)-u(x-z))}{|z|^{N+sp}}dz<\infty.
$$
Take $|z|\geq 1$. As $u\in L^{\infty}(\R^{N})$, we see that 
\begin{align*}
\left|\frac{|u(x)-u(x+z)|^{p-2}(u(x)-u(x+z))+|u(x)-u(x-z)|^{p-2}(u(x)-u(x-z))}{|z|^{N+sp}}\right|\leq \frac{2^{p}\|u\|^{p-1}_{L^{\infty}(\R^{N})}}{|z|^{N+sp}},
\end{align*}
which implies the assertion because $\frac{1}{|z|^{N+sp}}$ is integrable at infinity.
Next we demonstrate that
$$
\int_{B_{1}(0)\setminus B_{\e}(0)} \frac{|u(x)-u(x+z)|^{p-2}(u(x)-u(x+z))+|u(x)-u(x-z)|^{p-2}(u(x)-u(x-z))}{|z|^{N+sp}}dz<\infty.
$$
Let $\e\leq |z|<1$. We first assume $p\geq 2$.
Recalling that (see \cite[p. 255]{Simon})
$$
||a|^{p-2}a-|b|^{p-2}b|\leq C_{p}(|a|+|b|)^{p-2}|a-b| \quad \mbox{ for all } a, b\in \R,
$$
and utilizing the mean value theorem, we obtain
\begin{align*}
&\left|\frac{|u(x)-u(x+z)|^{p-2}(u(x)-u(x+z))+|u(x)-u(x-z)|^{p-2}(u(x)-u(x-z))}{|z|^{N+sp}}\right|\\
&\quad =\left|\frac{|\nabla u(x+\theta_{1}z)\cdot z|^{p-2}\nabla u(x+\theta_{1}z)\cdot z-|\nabla u(x-\theta_{2}z)\cdot z|^{p-2}\nabla u(x-\theta_{2}z)\cdot z}{|z|^{N+sp}}\right|\\
&\quad \leq 2^{p-2}C_{p} \|\nabla u\|^{p-2}_{L^{\infty}(\R^{N})}|z|^{p-2} \frac{|[\nabla u(x+\theta_{1}z)-\nabla u(x-\theta_{2}z)]\cdot z|}{|z|^{N+sp}} \\
&\quad \leq 2^{p-2+\gamma}C_{p} \|\nabla u\|^{p-2}_{L^{\infty}(\R^{N})} |z|^{p-2} \frac{[\nabla u]_{C^{0, \gamma}(\R^{N})}|z|^{\gamma+1}}{|z|^{N+sp}} \\
&\quad \leq 2^{p-2+\gamma} C_{p} \|u\|_{C^{1, \gamma}(\R^{N})}^{p-1}\frac{1}{|z|^{N+sp-(\gamma+p-1)}},
\end{align*}
where $\theta_{1}, \theta_{2}\in (0, 1)$ and $[\nabla u]_{C^{0, \gamma}(\R^{N})}=\sup_{\substack{x, y\in \mathbb{R}^{N} \\ x\neq y}}\frac{|\nabla u(x)-\nabla u(y)|}{|x-y|^{\gamma}}$. 
As the function
$$
\frac{1}{|z|^{N+sp-(\gamma+p-1)}}
$$ 
is integrable near the origin (thanks to $\gamma>1-p(1-s)$), we reach the assertion.
Now we suppose $p\in (1, 2)$. Then, observing that (see \cite[p. 255]{Simon})
$$
||a|^{p-2}a-|b|^{p-2}b|\leq C'_{p}|a-b|^{p-1} \quad \mbox{ for all } a, b\in \R,
$$
we get, with the same notations as before,
\begin{align*}
&\left|\frac{|u(x)-u(x+z)|^{p-2}(u(x)-u(x+z))+|u(x)-u(x-z)|^{p-2}(u(x)-u(x-z))}{|z|^{N+sp}}\right|\\
&\quad \leq C'_{p}\frac{| [\nabla u(x+\theta_{1}z)-\nabla u(x-\theta_{2}z)]\cdot z|^{p-1}}{|z|^{N+sp}}\\
&\quad \leq 2^{\gamma(p-1)}C'_{p} [\nabla u]^{p-1}_{C^{0, \gamma}(\R^{N})} |z|^{\gamma(p-1)} \frac{|z|^{p-1}}{|z|^{N+sp}} \\
&\quad \leq 2^{\gamma(p-1)}C'_{p} \|u\|_{C^{1, \gamma}(\R^{N})}^{p-1}\frac{1}{|z|^{N+sp-p+1-\gamma(p-1)}}.
\end{align*}
Due to the fact that 
$$
\frac{1}{|z|^{N+sp-p+1-\gamma(p-1)}}
$$ 
is integrable near the origin (because of $\gamma>\frac{1-p(1-s)}{p-1}$), we can conclude the proof. 
\end{proof}

\begin{remark}
We note that, if $p\in (1, \infty)$ and $s\in (0, \frac{p-1}{p})$, then Lemma \ref{RIF} is valid for all $u\in C^{0, \gamma}(\R^{N})$, with $\gamma\in (0, 1]$ such that $\gamma>\frac{sp}{p-1}$ (indeed, for the estimate near the origin, it suffices to use $||a|^{p-2}a-|b|^{p-2}b|\leq |a|^{p-1}+|b|^{p-1}$ for all $a, b\in \R$, and the H\"older continuity of $u$). From the proof of Lemma \ref{RIF}, we also deduce that $(-\Delta)^{s}_{p}u\in L^{\infty}(\R^{N})\cap C^{0}(\R^{N})$ for all $u\in C^{1, \gamma}(\R^{N})$, with $\gamma\in (0, 1]$ satisfying the restrictions in Lemma \ref{RIF}.
\end{remark}

\noindent
Next we prove a helpful integration by parts formula for $W^{s, p}(\R^{N})\cap C^{1, 1}(\R^{N})$ functions and vector fields of class $C^{0, 1}_{c}(\R^N, \R^N)$.
\begin{lem}\label{LEMMA}
Let $N\in \mathbb{N}$, $s\in (0, 1)$ and $p\in (1, \infty)$. When $p\in (1, 2)$, we assume that $s<\frac{2(p-1)}{p}$. Let $u\in W^{s, p}(\R^{N})\cap C^{1, 1}(\R^{N})$ and $\mathcal{X}\in C^{0, 1}_{c}(\R^{N}, \R^{N})$.  
Then it holds
\begin{align*}
&\frac{C_{N, s, p}}{2} \iint_{\R^{2N}} \frac{|u(x)-u(y)|^{p}}{|x-y|^{N+sp}} \left[\dive(\mathcal{X}(x))+\dive(\mathcal{X}(y))-(N+sp) \frac{(\mathcal{X}(x)-\mathcal{X}(y))\cdot (x-y)}{|x-y|^{2}} \right]  dxdy\\
&=-p \int_{\R^{N}} \mathcal{X}(x)\cdot \nabla u(x) \, (-\Delta)^{s}_{p}u(x) \,dx.
\end{align*}
\end{lem}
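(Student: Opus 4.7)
The plan is to exploit the behaviour of the Gagliardo integral under a one-parameter family of smooth changes of variables. For $t\in\R$, set $\Phi_{t}(x)=x+t\,\mathcal{X}(x)$. Since $\mathcal{X}\in C^{0,1}_{c}(\R^{N},\R^{N})$, there exists $t_{0}>0$ such that $\Phi_{t}$ is a bi-Lipschitz diffeomorphism of $\R^{N}$ with constants uniform in $|t|\leq t_{0}$; its inverse $\Psi_{t}$ satisfies, uniformly in $x\in\R^{N}$,
$$
\Psi_{t}(x)=x-t\,\mathcal{X}(x)+O(t^{2}), \qquad |\det D\Psi_{t}(x)|=1-t\,\dive\mathcal{X}(x)+O(t^{2}),
$$
the error terms vanishing outside $\supp\mathcal{X}$. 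Let $u_{t}:=u\circ\Phi_{t}\in W^{s,p}(\R^{N})\cap C^{1,1}(\R^{N})$ and introduce the functional
$$
\mathcal{F}(v):=\frac{C_{N,s,p}}{2p}\iint_{\R^{2N}}\frac{|v(x)-v(y)|^{p}}{|x-y|^{N+sp}}\,dxdy.
$$
The strategy is to compute $\frac{d}{dt}\mathcal{F}(u_{t})\big|_{t=0}$ in two distinct ways and equate the results.

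Direct differentiation under the integral sign, based on $\frac{d}{dt}u_{t}(x)\big|_{t=0}=\mathcal{X}(x)\cdot\nabla u(x)$ and the $C^{1}$ regularity of $\tau\mapsto|\tau|^{p}$ (valid since $p>1$), yields
$$
\frac{d}{dt}\mathcal{F}(u_{t})\Big|_{t=0}=\frac{C_{N,s,p}}{2}\iint_{\R^{2N}}\frac{|u(x)-u(y)|^{p-2}(u(x)-u(y))\bigl(\mathcal{X}(x)\cdot\nabla u(x)-\mathcal{X}(y)\cdot\nabla u(y)\bigr)}{|x-y|^{N+sp}}\,dxdy,
$$
which is precisely the symmetric bilinear pairing between $u$ and the test function $\phi:=\mathcal{X}\cdot\nabla u\in C^{0,1}_{c}(\R^{N})$. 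Using Lemma \ref{RIF} to represent $(-\Delta)^{s}_{p}u$ as a Lebesgue integral of its symmetrized kernel, and applying Fubini's theorem together with the changes of variables $y=x\pm z$, identifies the right-hand side with $\int_{\R^{N}}\mathcal{X}(x)\cdot\nabla u(x)\,(-\Delta)^{s}_{p}u(x)\,dx$. Alternatively, the substitution $(x,y)\mapsto(\Phi_{t}(x),\Phi_{t}(y))$ rewrites
$$
\mathcal{F}(u_{t})=\frac{C_{N,s,p}}{2p}\iint_{\R^{2N}}\frac{|u(x)-u(y)|^{p}}{|\Psi_{t}(x)-\Psi_{t}(y)|^{N+sp}}\,|\det D\Psi_{t}(x)|\,|\det D\Psi_{t}(y)|\,dxdy,
$$
and Taylor expanding the three $t$-dependent factors via the expansions above produces
$$
\frac{d}{dt}\mathcal{F}(u_{t})\Big|_{t=0}=-\frac{C_{N,s,p}}{2p}\iint_{\R^{2N}}\frac{|u(x)-u(y)|^{p}}{|x-y|^{N+sp}}\Bigl[\dive\mathcal{X}(x)+\dive\mathcal{X}(y)-(N+sp)\frac{(\mathcal{X}(x)-\mathcal{X}(y))\cdot(x-y)}{|x-y|^{2}}\Bigr]dxdy.
$$
Equating the two expressions and multiplying by $-p$ yields the desired identity.

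The main technical obstacle is the justification of differentiation under the integral sign in both computations, which reduces to producing an $L^{1}(\R^{2N})$ dominant for the relevant difference quotients uniformly in $|t|\leq t_{0}$. For this I would invoke: the Lipschitz estimate $|u(\Phi_{t}(x))-u(\Phi_{t}(y))|\leq\|\nabla u\|_{L^{\infty}}|\Phi_{t}(x)-\Phi_{t}(y)|\leq C|x-y|$ (from $u\in C^{1,1}$ and the uniform bi-Lipschitz property of $\Phi_{t}$); the Lipschitz control $|\mathcal{X}(x)\cdot\nabla u(x)-\mathcal{X}(y)\cdot\nabla u(y)|\leq C|x-y|$ (from $\mathcal{X}\in C^{0,1}$ and $\nabla u\in C^{0,1}$); and the compact support of $\mathcal{X}$, which localizes the analytic contribution and bounds the geometric bracket uniformly. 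Near the diagonal these estimates dominate the difference quotients by a constant multiple of $|x-y|^{-(N-p(1-s))}$, locally integrable since $p(1-s)>0$; away from the diagonal by $C/|x-y|^{N+sp}$, using that $u\in L^{\infty}$ (from $C^{1,1}$), globally integrable. For merely Lipschitz $\mathcal{X}$ the Jacobian expansion is understood pointwise on the full-measure set of differentiability of $\mathcal{X}$, which suffices for the integral identity; alternatively one may first establish the result for $\mathcal{X}\in C^{\infty}_{c}$ and then pass to the limit, since both sides of the claimed identity depend continuously on $\mathcal{X}$. With these preparations the Dominated Convergence Theorem validates both derivative computations and completes the proof.
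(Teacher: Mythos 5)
Your proof is correct, but it takes a genuinely different route from the paper's. The paper establishes the identity by a direct integration by parts: it rewrites the bracket using $\nabla_x(|x-y|^{-(N+sp)})$, applies the divergence theorem on $\R^N\setminus\overline{B_\mu(y)}$, collects the $\dive\mathcal{X}$ terms (which cancel against those already present), recognizes the remaining volume term as $-p\int\mathcal{X}\cdot\nabla u\,(-\Delta)^s_p u\,dx$ via Lemma~\ref{RIF}, and shows the boundary term on $\partial B_\mu$ is $O(\mu^{p(1-s)})$, hence vanishes. You instead use the flow $\Phi_t(x)=x+t\mathcal{X}(x)$ and compute $\frac{d}{dt}\mathcal{F}(u\circ\Phi_t)\big|_{t=0}$ in two ways: differentiation in the argument gives the weak pairing with the test function $\mathcal{X}\cdot\nabla u$, which Lemma~\ref{RIF} and dominated convergence identify with $\int\mathcal{X}\cdot\nabla u\,(-\Delta)^s_p u\,dx$; the change of variables $(x,y)\mapsto(\Phi_t(x),\Phi_t(y))$ and a Taylor expansion of the transformed kernel gives the deformation integral. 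This is the classical Noether/rescaling viewpoint on Pohozaev identities and has the conceptual advantage of avoiding boundary terms entirely, absorbing all convergence issues into a single dominated-convergence argument. Both approaches ultimately use the same ingredients (the near-diagonal Lipschitz control from $u\in C^{1,1}$, the $L^\infty$ bound off the diagonal, and the pointwise representation of Lemma~\ref{RIF}). The one delicate point you rightly flag is the Taylor expansion of $|\det D\Psi_t|$ when $\mathcal{X}$ is merely Lipschitz: since $\dive\mathcal{X}$ is only $L^\infty$, $\dive\mathcal{X}(\Psi_t(y))$ need not converge a.e.\ to $\dive\mathcal{X}(y)$, so the ``pointwise on the set of differentiability'' argument is shaky as stated; your alternative of first proving the identity for $\mathcal{X}\in C^1_c$ and then passing to the Lipschitz limit by mollification (with a uniform $L^1$ dominant, using that $\dive(\mathcal{X}*\rho_n)\to\dive\mathcal{X}$ a.e.\ with a uniform $L^\infty$ bound) is the clean way to close this and should be the one retained.
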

\begin{proof}
We first notice that $1>1-p(1-s)$ for all $p\in (1, \infty)$ and $s\in (0, 1)$, whereas $1>\frac{1-p(1-s)}{p-1}$ for all $p\in (1, \infty)$ and $s\in (0, \frac{2(p-1)}{p})$. This means that Lemma \ref{RIF} can be applied to $u$ with $\gamma=1$. In what follows, we modify in a suitable way some arguments found in \cite[Lemma 4.2]{DFW1}. 
Exploiting the symmetry of the kernel $\frac{1}{|x|^{N+sp}}$ and Fubini's theorem, we see that 
\begin{align*}
&\frac{C_{N, s, p}}{2} \iint_{\R^{2N}} \frac{|u(x)-u(y)|^{p}}{|x-y|^{N+sp}} \left[\dive(\mathcal{X}(x))+\dive(\mathcal{X}(y))-(N+sp) \frac{(\mathcal{X}(x)-\mathcal{X}(y))\cdot (x-y)}{|x-y|^{2}} \right]  dxdy \\
&\quad =-\frac{C_{N, s, p}(N+sp)}{2} \iint_{\R^{2N}} |u(x)-u(y)|^{p} \frac{(x-y)\cdot (\mathcal{X}(x)-\mathcal{X}(y))}{|x-y|^{N+sp+2}} dxdy\\
&\quad \quad+\frac{C_{N, s, p}}{2} \iint_{\R^{2N}}  |u(x)-u(y)|^{p} \frac{\dive \mathcal{X}(x)+\dive \mathcal{X}(y)}{|x-y|^{N+sp}}dxdy\\
&\quad =-\frac{C_{N, s, p}(N+sp)}{2} \lim_{\mu\ri 0^{+}}\iint_{|x-y|>\mu} |u(x)-u(y)|^{p} \frac{(x-y)\cdot (\mathcal{X}(x)-\mathcal{X}(y))}{|x-y|^{N+sp+2}} dxdy\\
&\quad \quad+C_{N, s, p} \iint_{\R^{2N}}  |u(x)-u(y)|^{p} \frac{\dive \mathcal{X}(x)}{|x-y|^{N+sp}}dxdy\\
&\quad =-C_{N, s, p}(N+sp) \lim_{\mu\ri 0^{+}} \int_{\R^{N}}\int_{\R^{N}\setminus \overline{B_{\mu}(y)}} |u(x)-u(y)|^{p}  \frac{(x-y)\cdot \mathcal{X}(x)}{|x-y|^{N+sp+2}}dxdy\\
&\quad \quad+C_{N, s, p} \iint_{\R^{2N}}  |u(x)-u(y)|^{p} \frac{\dive \mathcal{X}(x)}{|x-y|^{N+sp}}dxdy.
\end{align*}
Now, applying the divergence theorem in the domain $\R^{N}\setminus \overline{B_{\mu}(y)}$, with $y\in \R^{N}$ and $\mu>0$ fixed, and using the fact that 
$$
\nabla_{x}(|x-y|^{-(N+sp)})=-(N+sp)\frac{x-y}{|x-y|^{N+sp+2}},
$$ 
we have
\begin{align}\label{IBPF1}
&\frac{C_{N, s, p}}{2} \iint_{\R^{2N}} \frac{|u(x)-u(y)|^{p}}{|x-y|^{N+sp}} \left[\dive(\mathcal{X}(x))+\dive(\mathcal{X}(y))-(N+sp) \frac{(\mathcal{X}(x)-\mathcal{X}(y))\cdot (x-y)}{|x-y|^{2}} \right]  dxdy \nonumber\\
&\quad=C_{N, s, p} \lim_{\mu\ri 0^{+}} \int_{\R^{N}}\int_{\R^{N}\setminus \overline{B_{\mu}(y)}} |u(x)-u(y)|^{p} \, \nabla_{x}(|x-y|^{-(N+sp)}) \cdot \mathcal{X}(x) \, dxdy \nonumber\\
&\quad\quad+C_{N, s, p} \iint_{\R^{2N}}  |u(x)-u(y)|^{p} \frac{\dive \mathcal{X}(x)}{|x-y|^{N+sp}}dxdy \nonumber\\
&\quad=-C_{N, s, p} \lim_{\mu\ri 0^{+}}\int_{\R^{N}} \int_{\R^{N}\setminus \overline{B_{\mu}(y)}} |u(x)-u(y)|^{p} \frac{\dive \mathcal{X}(x)}{|x-y|^{N+sp}}dxdy \nonumber\\
&\quad\quad -p C_{N, s, p} \lim_{\mu\ri 0^{+}}\int_{\R^{N}} \int_{\R^{N}\setminus \overline{B_{\mu}(y)}} |u(x)-u(y)|^{p-2} (u(x)-u(y))  \frac{\nabla u(x)\cdot \mathcal{X}(x)}{|x-y|^{N+sp}}dxdy \nonumber\\
&\quad\quad +C_{N, s, p}\lim_{\mu\ri 0^{+}}  \int_{\R^{N}} \int_{\partial B_{\mu}(y)} |u(x)-u(y)|^{p} \frac{(y-x)\cdot \mathcal{X}(x)}{|x-y|^{N+sp+1}} \, d\sigma(y) dx \nonumber\\
&\quad\quad+C_{N, s, p} \iint_{\R^{2N}}  |u(x)-u(y)|^{p} \frac{\dive \mathcal{X}(x)}{|x-y|^{N+sp}}dxdy \nonumber\\
&\quad=-pC_{N, s, p} \lim_{\mu\ri 0^{+}} \iint_{|x-y|>\mu} |u(x)-u(y)|^{p-2}(u(x)-u(y)) \frac{\nabla u(x)\cdot \mathcal{X}(x)}{|x-y|^{N+sp}} dxdy\nonumber\\
&\quad\quad+C_{N, s, p} \lim_{\mu\ri 0^{+}}  \mu^{-N-1-sp} \iint_{|x-y|=\mu} |u(x)-u(y)|^{p} (y-x)\cdot \mathcal{X}(x) d\sigma(x, y) \nonumber\\
&\quad=-\frac{pC_{N, s, p}}{2} \lim_{\mu\ri 0^{+}} \int_{\R^{N}} \nabla u(x)\cdot \mathcal{X}(x) \int_{\R^{N}\setminus \overline{B_{\mu}(0)}} \left( \frac{|u(x)-u(x+z)|^{p-2}(u(x)-u(x+z))}{|z|^{N+sp}} \right. \nonumber\\
&\quad\quad \left.+\frac{|u(x)-u(x-z)|^{p-2}(u(x)-u(x-z))}{|z|^{N+sp}}\right) dzdx\nonumber\\
&\quad\quad+\frac{C_{N, s, p}}{2} \lim_{\mu\ri 0^{+}}  \mu^{-N-1-sp} \iint_{|x-y|=\mu} |u(x)-u(y)|^{p} (y-x)\cdot (\mathcal{X}(x)-\mathcal{X}(y)) d\sigma(x, y)\nonumber\\
&\quad=-p Y_{1}+Y_{2}.
\end{align} 
It follows from Lemma \ref{RIF} that
\begin{align}\label{IBPF2}
Y_{1}&=\frac{C_{N, s, p}}{2}  \int_{\R^{N}}  \nabla u(x)\cdot \mathcal{X}(x) \int_{\R^{N}} \left(\frac{|u(x)-u(x+z)|^{p-2}(u(x)-u(x+z))}{|z|^{N+sp}}\right. \nonumber\\
&\quad \left. +\frac{|u(x)-u(x-z)|^{p-2}(u(x)-u(x-z))}{|z|^{N+sp}} \right) dzdx \nonumber\\
&=\int_{\R^{N}} \nabla u(x)\cdot \mathcal{X}(x)  \, (-\Delta)^{s}_{p}u(x) \,dx.
\end{align}
Next we claim that
\begin{align}\label{IBPF3}
Y_{2}=0.
\end{align}
Indeed, as $\mathcal{X}$ has compact support, we can find $R>0$ large enough such that $(\mathcal{X}(x)-\mathcal{X}(y))=0$ for all $x, y\in B_{R}(0)$ with $|x-y|<1$. For all $\mu\in (0, 1)$, we put
$$
\mathcal{N}_{\mu}=\Bigl\{(x, y)\in B_{R}(0)\times B_{R}(0): |x-y|=\mu \Bigr\}.
$$
Because $u\in C^{0, 1}(\R^{N})$ and $\mathcal{X}\in C^{0, 1}(\R^{N}, \R^{N})$, and using that the $2N-1$-dimensional measure of the set $\mathcal{N}_{\mu}$ is of order $O(N-1)$ as $\mu\ri 0^{+}$, we have
\begin{align*}
&\mu^{-N-1-sp} \iint_{|x-y|=\mu} |u(x)-u(y)|^{p}(y-x)\cdot (\mathcal{X}(x)-\mathcal{X}(y)) d\sigma(x, y)\\
&\quad=\mu^{-N-1-sp} \iint_{\mathcal{N}_{\mu}} |u(x)-u(y)|^{p}(y-x)\cdot (\mathcal{X}(x)-\mathcal{X}(y)) d\sigma(x, y)=O(\mu^{p(1-s)})\ri 0 \quad \mbox{ as } \mu\ri 0^{+},
\end{align*}
that is, \eqref{IBPF3} is valid. Combining \eqref{IBPF1}, \eqref{IBPF2}, and \eqref{IBPF3}, we obtain the desired formula.
\end{proof}

\noindent
Now, we are ready to provide the proof of Theorem \ref{thm2}.
\begin{proof}[Proof of Theorem \ref{thm2}]
Let $u\in W^{s, p}(\R^{N})\cap C^{1, 1}(\R^{N})$ be such that $(-\Delta)^{s}_{p}u=g(u)$ in $\R^{N}$. Pick $\varphi\in C^{1}_{c}(\R^{N})$ such that $0\leq \varphi\leq 1$ in $\R^{N}$, $\varphi(x)=1$ for $|x|\leq 1$ and $\varphi(x)=0$ for $|x|\geq 2$. Put $\varphi_{\lambda}(x)=\varphi(\lambda x)$ for all $x\in \R^{N}$ and $\lambda>0$.
Note that, for all $x\in \R^{N}$ and $\lambda>0$, 
\begin{align}\label{WS}
0\leq\varphi_{\lambda}(x)\leq 1 \,\mbox{ and } \, |x||\nabla \varphi_{\lambda}(x)|\leq C_{1},
\end{align}
 where the constant $C_{1}>0$ is independent of $\lambda$. 
As the $C^1$ vector field $\mathcal{X}_{\lambda}(x)=\varphi_{\lambda}(x)x\in C^{0, 1}_{c}(\R^{N}, \R^{N})$ (note that \eqref{WS} implies that $|x\varphi_{\lambda}(x)-y\varphi_{\lambda}(y)|\leq C_{2}|x-y|$ for some constant $C_{2}>0$ independent of $\lambda$), and $u\in W^{s, p}(\R^{N})\cap C^{1, 1}(\R^{N})$, we can apply 
Lemma \ref{LEMMA} to deduce that 
\begin{align*}
&\frac{C_{N, s, p}}{2p} \iint_{\R^{2N}} \frac{|u(x)-u(y)|^{p}}{|x-y|^{N+sp}} \left[\dive(\mathcal{X}_{\lambda}(x))+\dive(\mathcal{X}_{\lambda}(y))-(N+sp) \frac{(\mathcal{X}_{\lambda}(x)-\mathcal{X}_{\lambda}(y))\cdot (x-y)}{|x-y|^{2}} \right]  dxdy\\
&=-\int_{\R^{N}} \mathcal{X}_{\lambda}(x)\cdot \nabla u(x) \, (-\Delta)^{s}_{p}u(x) \,dx\\
&=-\int_{\R^{N}} \mathcal{X}_{\lambda}(x)\cdot \nabla u(x) \, g(u(x)) \,dx.
\end{align*}
An integration by parts ensures that
\begin{align*}
-\int_{\R^{N}} g(u(x)) \varphi_{\lambda}(x)x\cdot \nabla u(x)dx=N \int_{\R^{N}} \varphi_{\lambda}(x)G(u(x))dx+\lambda \int_{\R^{N}} G(u(x)) x\cdot \nabla \varphi_{\lambda}(x)dx,
\end{align*}
and so
\begin{align}\label{BRUN1}
&\frac{C_{N, s, p}}{2p} \iint_{\R^{2N}} \frac{|u(x)-u(y)|^{p}}{|x-y|^{N+sp}} \left[\dive(\mathcal{X}_{\lambda}(x))+\dive(\mathcal{X}_{\lambda}(y))-(N+sp) \frac{(\mathcal{X}_{\lambda}(x)-\mathcal{X}_{\lambda}(y))\cdot (x-y)}{|x-y|^{2}} \right]  dxdy \nonumber\\
&=N \int_{\R^{N}} \varphi_{\lambda}(x)G(u(x))dx+\lambda \int_{\R^{N}} G(u(x)) x\cdot \nabla \varphi_{\lambda}(x)dx.
\end{align}
From \eqref{WS}, we see that, for all $x, y\in \R^{N}$ with $x\neq y$ and $\lambda>0$, 
$$
\left| \dive(\mathcal{X}_{\lambda}(x))+\dive(\mathcal{X}_{\lambda}(y))-(N+sp) \frac{(\mathcal{X}_{\lambda}(x)-\mathcal{X}_{\lambda}(y))\cdot (x-y)}{|x-y|^{2}}\right|\leq C_{3},
$$
for some constant $C_{3}>0$ independent of $\lambda$. In view of this fact, $u\in W^{s, p}(\R^{N})$, \eqref{WS}, the pointwise convergences $\varphi_{\lambda}(x)\ri 1$, $\mathcal{X}_{\lambda}(x)\ri x$ and $\dive\mathcal{X}_{\lambda}(x)\ri N$ for all $x\in \R^N$, as $\lambda\ri 0^{+}$, and $G(u)\in L^{1}(\R^{N})$, we can utilize the dominated convergence theorem to infer that, as $\lambda\ri 0^{+}$,
\begin{align}\label{BRUN2}
&\frac{C_{N, s, p}}{2p} \iint_{\R^{2N}} \frac{|u(x)-u(y)|^{p}}{|x-y|^{N+sp}} \left[\dive(\mathcal{X}_{\lambda}(x))+\dive(\mathcal{X}_{\lambda}(y))-(N+sp) \frac{(\mathcal{X}_{\lambda}(x)-\mathcal{X}_{\lambda}(y))\cdot (x-y)}{|x-y|^{2}} \right]  dxdy \nonumber\\
&\ri \frac{C_{N, s, p}(N-sp)}{2p} \iint_{\R^{2N}} \frac{|u(x)-u(y)|^{p}}{|x-y|^{N+sp}} dxdy,
\end{align}
and
\begin{align}\label{BRUN3}
N \int_{\R^{N}} \varphi_{\lambda}(x)G(u(x))dx+\lambda \int_{\R^{N}} G(u(x)) x\cdot \nabla \varphi_{\lambda}(x)dx\ri N \int_{\R^{N}} G(u(x))dx.
\end{align}
On account of \eqref{BRUN1}, \eqref{BRUN2}, and \eqref{BRUN3}, we arrive at \eqref{POHOZAEV}. The proof is now complete.
\end{proof}

\smallskip

\noindent
\section*{Acknowledgements} 
The author warmly thanks the anonymous referees for their valuable comments on the paper.


\begin{thebibliography}{777}

\bibitem{AL}
F.J.Jr.Almgren and E.H.Lieb, 
{\it Symmetric decreasing rearrangement is sometimes continuous}, 
J. Amer. Math. Soc. {\bf 2} (1989), no. 4, 683--773. 


\bibitem{A}
V.Ambrosio, 
{\it Nonlinear fractional Schr\"odinger equations in $\R^N$}, Frontiers in Elliptic and Parabolic Problems. Birkh\"auser/Springer, Cham, 2021. xvii+662 pp. 


\bibitem{ANA}
V.Ambrosio, 
{\it On the uniform vanishing property at infinity of $W^{s,p}$-sequences}, 
Nonlinear Anal. {\bf 238} (2024), Paper no. 113398, 17 pp.


\bibitem{AIDCDS}
V.Ambrosio and T.Isernia,
{\it Multiplicity and concentration results for some nonlinear Schr\"odinger equations with the fractional $p$-Laplacian}, 
Discrete Contin. Dyn. Syst. {\bf 38} (2018), no. 11, 5835--5881.

\bibitem{AMRT}
F.Andreu, J.M. Maz\'on, J.D. Rossi and J.Toledo, 
{\it A nonlocal $p$-Laplacian evolution equation with nonhomogeneous Dirichlet boundary conditions}, 
SIAM J. Math. Anal. {\bf 40} (2008/09), no. 5, 1815--1851. 

\bibitem{BL}
H.Berestycki and P.-L.Lions, 
{\it Nonlinear scalar field equations. I. Existence of a ground state}, 
Arch. Rational Mech. Anal. {\bf 82} (1983), no. 4, 313--345.

\bibitem{BLS}
L.Brasco, E.Lindgren, A.Schikorra, 
{\it Higher H\"older regularity for the fractional $p$-Laplacian in the superquadratic case}, 
Adv. Math. {\bf 338} (2018), 782--846.

\bibitem{B}
E.Br\"uning,
{\it On the variational approach to semilinear elliptic equations with scale-covariance},  
J. Differential Equations {\bf 83} (1990), no. 1, 109--144.


\bibitem{CS}
L.Caffarelli and L.Silvestre, 
{\it An extension problem related to the fractional Laplacian}, 
Comm. Partial Differential Equations {\bf 32} (2007), no. 7-9, 1245--1260. 


\bibitem{Chab}
J.Chabrowski, 
{\it Variational methods for potential operator equations.
With applications to nonlinear elliptic equations}, De Gruyter Studies in Mathematics, 24. Walter de Gruyter $\&$ Co., Berlin, 1997. x+290 pp.

\bibitem{CW}
X.Chang and Z.-Q.Wang, 
{\it Ground state of scalar field equations involving a fractional Laplacian with general nonlinearity}, Nonlinearity {\bf 26} (2013), no. 2, 479--494.

\bibitem{CL}
W.Chen and C.Li, 
{\it Maximum principles for the fractional $p$-Laplacian and symmetry of solutions}, 
Adv. Math. {\bf 335} (2018), 735--758.

\bibitem{DFM}
C.De Filippis and G.Mingione,
{\it Gradient regularity in mixed local and nonlocal problems},
Math. Ann. {\bf 388} (2024), no. 1, 261--328.


\bibitem{DGMS}
M.Degiovanni, A.Musesti and M.Squassina, 
{\it On the regularity of solutions in the Pucci-Serrin identity}, 
Calc. Var. Partial Differential Equations {\bf 18} (2003), no. 3, 317--334.


\bibitem{DPQJDE}
L.M.Del Pezzo and A.Quaas,
{\it A Hopf's lemma and a strong minimum principle for the fractional $p$-Laplacian}, 
J. Differential Equations {\bf 263} (2017), no. 1, 765--778.





\bibitem{DTGCV}
F.del Teso, D.G\'omez-Castro and J.L.V\'azquez, 
{\it Three representations of the fractional p-Laplacian: semigroup, extension and Balakrishnan formulas}, 
Fract. Calc. Appl. Anal. {\bf 24} (2021), no. 4, 966--1002.

\bibitem{DPV}
E.Di Nezza, G.Palatucci and E.Valdinoci, 
{\it Hitchhiker's guide to the fractional Sobolev spaces},
Bull. Sci. Math. {\bf 136} (2012), no. 5, 521--573. 

\bibitem{DFW1}
S.M.Djitte, M.M.Fall and T.Weth,
{\it A fractional Hadamard formula and applications},
 Calc. Var. Partial Differential Equations {\bf 60} (2021), no. 6, Paper no. 231, 31 pp.




\bibitem{GV}
M.Guedda and L.Veron, 
{\it Quasilinear elliptic equations involving critical Sobolev exponents}, 
Nonlinear Anal. {\bf 13} (1989), no. 8, 879--902.

\bibitem{IMS}
A.Iannizzotto, S.Mosconi and M.Squassina, 
{\it Global H\"older regularity for the fractional $p$-Laplacian}, 
Rev. Mat. Iberoam., \textbf{32} (2016), no. 4, 1353--1392.

\bibitem{IN}
H.Ishii and G.Nakamura, 
{\it A class of integral equations and approximation of $p$-Laplace equations}, 
Calc. Var. Partial Differential Equations {\bf 37} (2010), no. 3-4, 485--522.

\bibitem{Lions}
P.-L.Lions, 
{\it Sym\'etrie et compacit\'e dans les espaces de Sobolev}, 
J. Functional Analysis {\bf 49} (1982), no. 3, 315--334.


\bibitem{PS}
P.Pucci and J.Serrin,
{\it A general variational identity},
Indiana Univ. Math. J. {\bf 35} (1986), no. 3, 681--703. 


\bibitem{RS}
X.Ros-Oton and J.Serra, 
{\it The Pohozaev identity for the fractional Laplacian}, 
Arch. Ration. Mech. Anal. {\bf 213} (2014), no. 2, 587--628.

\bibitem{Simon}
J.Simon,
{\it R\'egularit\'e de la solution d'un probl\`eme aux limites non lin\'eaires},  Ann. Fac. Sci. Toulouse Math. {\bf 3} (1981), 247--274.

\end{thebibliography}
\end{document}